\titlespacing{\paragraph}{0em}{0em}{0.5em}
\titlespacing{\subparagraph}{0em}{0em}{0.5em}
\def\tE{\widetilde{E}}
\def\hB{\hat{B}}
\def\hk{\hat{k}}
\def\cR{\mathcal R} 
\def\t$N{\tilde{N}}
\def\eps{\varepsilon}
\def\a{          \alpha}
\def\C{          \mathbb C}
\def \g{{\gamma}}
\def \R{{\mathbb R}}
\def \Z{{\mathbb Z}}
\def \N{{\mathbb N}}
\def \t{{\theta}}
\newcommand{\T}{{\mathbb T}}
\newcommand{\prf}{{\begin{proof}}}
\newcommand{\epf}{{\bg \end{proof} \black }}
\newtheorem{theo}{\sc \bg Theorem}
\newtheorem{prop}{\sc \bg Proposition}
\newtheorem{lemm}{\sc \bg Lemma}
\newtheorem{coro}{\sc \bg Corollary}
 \newcommand{\tH}{\widetilde{H}}
 \newcommand{\tF}{\widetilde{F}}
\def\pa{\partial}
\def\bee{\begin{equation}}
\def\eee{\end{equation}}
\newcommand{\pdvr}[2]
{\dfrac{\partial^{#2} #1}{\partial \theta^{#2_1} \partial r^{#2_2}}}
\newcommand{\pdvrs}[2]
{\partial^{#2} #1 /\partial \theta^{#2_1} \partial r^{#2_2}}
\newcommand{\bxi}{{\\bar \xi}}
\def\bbH{h}
\def\om{\omega}
\def\bxi{\bar \xi}
\def\beta{\bar \eta}
\title{Lyapunov unstable elliptic equilibria}
\author[Bassam Fayad]{Bassam Fayad}
\definecolor{bluegray}{rgb}{0.1, 0.1, 0.6}
\def\bluegray{\color{bluegray}}
\def\black{\color{black}}
\def\bg{\color{bluegray}}
\def\III{I_3}
\begin{document}
\nocite{*}
\setcounter{tocdepth}{2}
 \maketitle 

\bibliographystyle{abnt-num}

\maketitle

{\color{bluegray} \begin{abstract} {\color{black} A new diffusion mechanism  from the neighborhood of elliptic equilibria for Hamiltonian flows in three or more degrees of freedom is introduced. We thus obtain explicit real entire Hamiltonians on $\R^{2d}$, $d\geq 4$, that have a Lyapunov unstable elliptic equilibrium with an arbitrary chosen frequency vector whose coordinates are not all of the same sign. For non-resonant frequency vectors, our examples all have divergent Birkhoff normal form at the equilibrium. 

On $\R^4$, we give explicit examples of real entire Hamiltonians having an equilibrium with an arbitrary chosen non-resonant frequency vector and a divergent  Birkhoff normal form.
}
\end{abstract}}

\color{bluegray}
\section*{ \bf  Introduction} 
\medskip 
\color{black}

 An equilibrium $(p,q)\in \R^{2d}$ of an autonomous Hamiltonian flow  is said to be  Lyapunov stable or topologically stable if all nearby orbits remain close to $0$ for all forward time. 

The topological stability of the equilibria of Hamiltonian flows is one of the oldest problems in mathematical physics. The important contributions to the understanding of this problem, dating back to the 18th century, form a fundamental part of the foundation and of the evolution of the theory of dynamical systems and celestial mechanics up to our days. 

The goal of this note is to give  examples of real analytic Hamiltonians that have a Lyapunov unstable non-resonant elliptic equilibrium.

\bigskip

A $C^2$ function $H:(\R^{2d},0)\to\R$ such that $DH(0)=0$ defines a Hamiltonian vector field $X_{H}(x,y)=(\pa_{y}H(x,y),-\pa_{x}H(x,y))$  whose flow $\phi^t_{H}$  preserves the origin. 

Naturally, to study the stability of the equilibrium at the origin,  one has first to investigate the stability of the linearized system 
 at the origin. By symplectic symmetry, the eigenvalues of the linearized system come by pairs $\pm \lambda$, $\lambda \in \C$. 
It follows that if the linearized system has an eigenvalue with a non zero real part, it also has an eigenvalue with positive  real part and this implies instability of the origin for the linearized system as well as for the non-linear flow. 

When all the eigenvalues of the linearized system are on the imaginary axis the stability question is more intricate. In the non-degenerate case where the eigenvalues are simple, we say that the origin is an {\it elliptic} equilibrium. The linear system is then symplectically conjugated to a direct product of planar rotations. The arguments of the eigenvalues are called the frequencies of the equilibrium since they correspond to angles of rotation of the linearized system.  We  
focus our attention on Hamiltonians $H:(\R^{2d},0)\to\R$ of the form \begin{align} \label{star}
 \tag{$*$} H(x,y)&=H_\om(x,y)+{\mathcal O}^3(x,y),\\  \nonumber H_\om(x,y)&=\sum_{j=1}^3 \om_j I_j, \quad I_j=\frac12(x_j^2+y_j^2). \end{align}
where $\omega\in\R^d$ has rationally independent coordinates. The elliptic equilibrium at the origin of the flow of $X_H$ is then said to be {\it non-resonant}.

The phenomenon of averaging out of the non-integrable part of the nonlinearity effects at a non-resonant frequency is responsible for the long time effective stability around the equilibrium : the points near the equilibrium remain in its neighborhood during a time that is greater than any negative power of their distance to the equilibrium. This can be formally studied and proved using the Birkhoff Normal 
Forms (BNF) at the equilibrium, that introduce action-angle coordinates in which the system is integrable up to arbitrary high degree in its Taylor series (see Section \ref{sec.BNF} for some reminders about the BNF, and \cite{Bi} or \cite{SM}, for example, for more details). Moreover, it was proven in \cite{MG,BFN} that a typical elliptic fixed point is doubly exponentially stable in the sense that 
a neighboring  point of  the equilibrium  remains close to it
for an interval of time which is doubly exponentially large with respect to some power of the inverse
of the distance to the equilibrium point.

In addition to the long time effective stability of non-resonant equilibria, KAM theory (after Kolmogorov Arnold and Moser), asserts that a non-resonant  elliptic fixed point is in general accumulated by quasi-periodic invariant Lagrangian tori whose relative measurable density tends to one in small neighborhoods of the fixed point. This can be viewed as stability in a probabilistic sense, and is usually coined {\it KAM stability}. In classical KAM theory, KAM stability is established when the BNF has a non-degenerate Hessian. 
 Further development of the theory allowed to relax the non degeneracy condition and \cite{EFK_point} proved KAM-stability of a non-resonant elliptic fixed point under the non-degeneracy condition of the BNF (see Section \ref{sec.BNF}).

Despite the long time effective stability, and despite the genericity of KAM-stability, Arnold conjectured that apart from two cases, the case of a sign-definite quadratic part of the Hamiltonian, and generically for $d=2$, an elliptic equilibrium point of a generic real analytic Hamiltonian system is Lyapunov unstable \cite[Section 1.8]{arnold}.


Although a rich literature in the direction of proving this conjecture exist in the $C^\infty$ smoothness (we mention \cite{KMV} below, but to give a list of contributions would exceed the scope of this introduction), the conjecture is still wide open in the real analytic category. For instance, not a single example of real analytic Hamiltonians was known that  has an unstable non-resonant elliptic equilibrium. 
The main goal of this work is to give the first examples of real analytic Hamiltonians having an unstable non-resonant elliptic equilibrium, with an arbitrary frequency vector for $d\geq 4$. 

Inspired by the construction of these examples, we will obtain the first explicit examples of real entire Hamiltonians having an elliptic equilibrium at the origin and a divergent BNF, and this for {\it any} degree of freedom and for any non-resonant frequency vector.

\color{bluegray}
\section{ \bf Unstable equilibria and divergent Birkhoff normal forms. General statements} 
\medskip 
\color{black}

\color{bluegray} 
\subsection{Lyapunov unstable equilibria} 
\color{black}

We start with  the existence of real entire Hamiltonians with unstable non-resonant equilibria. 
 
 \medskip 
 
\noindent {\color{bluegray} \sc Theorem {\bg A}.  --  } {\it  There exists a non-resonant $\om \in \R^3$ and a real entire Hamiltonian $H : \R^6 \to \R$, such that the origin is a Lyapunov unstable elliptic equilibrium with frequency $\om$ of the Hamiltonian flow $\Phi^t_H$ of $H$.

For any $\om \in \R^{d}, d \geq 4$, such that not all its coordinates are of the same sign, there exists a real entire Hamiltonian $H: \R^{2d} \to \R$ such that the origin is a Lyapunov unstable elliptic equilibrium with frequency $\om$ of the Hamiltonian flow $\Phi^t_H$ of $H$.

Moreover, in all our examples, for non-resonant frequencies $\om$, the Birkhoff normal form at the origin is divergent.

Finally, for non-resonant frequencies $\om$, it is possible to choose the Hamiltonians $H$ such that the origin is KAM stable. }

\medskip 

Detailed statements with an explicit definition of the Hamiltonians that prove Theorem {\bg A}, will be given in Section \ref{sec.statements}.

Note that we do not obtain the existence of an orbit that accumulates on the origin.  Based on a different diffusion mechanism, \cite{FMS} gives examples of smooth symplectic diffeomorphisms of $\R^6$ having a non-resonant elliptic fixed  point that attracts an orbit. 

Note also that  the question of Lyapunov instability in two degrees of freedom remains open. The question remains open also in the case of three degrees of freedom and Diophantine frequency vectors.

As explained earlier,  Lyapunov instability of an elliptic fixed point of frequency vector $\om$ is only possible when not all  the coordinates of $\om$ are of the same sign.

\bg \subsection{\bf \bg The case of quasi-periodic tori} \black The same constructions that yield Theorem {\bg A} can be carried out on $\R^d \times \T^d$ to get examples, starting from $d=3$, of real analytic Hamiltonians with an invariant quasi-periodic torus $\{0\}\times \T^d$ that is Lyapunov unstable. Moreover, in that case, the condition that the coordinates of the frequency vector of the quasi-periodic torus are not all of the same sign is not anymore required. We will explain this in Section \ref{sec.torus} after the explicit form of the Hamiltonians with Lyapunov unstable equilibria is given. We do not pursue the constructions on $\R^d \times \T^d$ in detail in this paper, because the work  \cite{FF} provides many examples of real analytic Hamiltonians with  invariant quasi-periodic tori $\{0\}\times \T^d$ that are Lyapunov unstable. The construction method of \cite{FF} is different from the one introduced here. The constructions of \cite{FF}  are limits of successive conjugacies of integrable Hamiltonians and as such, in contrast with the examples given here, they have a convergent Birkhoff normal form at the invariant quasi-periodic torus. 
 The method of \cite{FF} is for the moment inapplicable to equilibrium points and the question of having unstable elliptic equilibrium with a convergent BNF is still completely open in the real analytic setting.

\color{bluegray}
\subsection{Generic divergence of Birkhoff normal forms for all $d\geq 2$} \color{black}   Inspired by the constructions of Theorem {\bg A}, it is possible to obtain explicit examples of real entire Hamiltonians having an elliptic equilibrium at the origin and a divergent BNF for {\it any} degree of freedom including $d=2$, and for any non-resonant frequency vector, including vectors whose coordinates are all of the same sign.  The difference with the examples of Theorem {\bg A} is that the divergence of the BNF in the case $d=2$ and the case all the coordinates of the frequency vector are of the same sign, do not give much informations about the asymptotic dynamics in the neighborhood of the origin. 


\medskip 
 
\noindent {\color{bluegray} \sc Theorem {\bg B}.  --  } {\it 
For any non-resonant $\om \in \R^{d}, d \geq 2$, 
there exists a real entire Hamiltonian $H: \R^{2d} \to \R$ such that the origin is an elliptic equilibrium with frequency $\om$ of the Hamiltonian flow $\Phi^t_H$, and such that the Birkhoff normal form at the origin is divergent.}

\medskip

The proof of Theorem {\bg B} is inspired from the proof of the divergence of the BNF in the examples of Theorem {\bg A}. However, to include the  two degrees of freedom case requires a substantial difference  on which we will  comment once the examples are explicitly  stated in Section \ref{sec.newBNF}. In the end of Section \ref{sec.newBNF}, we explain the slight modification required to prove Theorem {\bg B} in the case of non-resonant vectors whose coordinates are all of the same sign.


Note that, due to the result of Perez-Marco of \cite{PMannals}, the existence for any non-resonant $\om \in \R^d$ of just one example of a real analytic Hamiltonian with divergent BNF, implies that divergence of the BNF is typical for this frequency. Denote by $\mathcal H_\om$, the set of  analytic Hamiltonians having an elliptic fixed point of frequency $\om$ at the origin. As a consequence of Theorem {\bg B} and of \cite[Theorem 1]{PMannals} we get 

\bigskip 

\noindent {\color{bluegray} \sc Corollary.  --  } {\it 
For any non-resonant $\om \in \R^{d}, d \geq 2$, the generic Hamiltonian in $\mathcal H_\om$ has a divergent BNF at the origin.  

More precisely, all Hamiltonians in any complex (resp. real ) affine finite-dimensional subspace $V$ of $\mathcal H_\om$ have a divergent BNF except  for  an exceptional pluripolar set.}

\bigskip

This answers, for all frequency vectors the question of Eliasson on the typical behavior of the BNF (see for example \cite{E1,E2,EFK} and the discussion around this question in \cite{PMannals}). What was known up to recently, was the generic divergence of the normalization, proved by Siegel in 1954 \cite{siegel} in $\mathcal H_\om$ for any fixed $\om$. Examples of analytic Hamiltonians with non-resonant elliptic fixed points and divergent BNF were constructed by 
 Gong \cite{gong} on $\R^{2d}$ for arbitrary $d\geq 2$, but only for some class of Liouville frequency vectors.

The  generic divergence of the BNF was recently obtained by Krikorian for symplectomorphisms 
of the plane with an elliptic fixed point at the origin \cite{kriko}. The method of Krikorian is completely different from ours and does not rely on the dichotomy proved by Perez-Marco. He has an indirect proof that gives a more refined result than the generic divergence of the BNF. Indeed, he proves that the convergence of the BNF, combined with torsion (a generic condition), implies the existence of a larger measure set of invariant curves in small neighborhoods of the origin than what  actually  holds for a generic symplectomorphism. 


\bigskip

\color{bluegray}
\subsection{About the diffusion mechanism that will be used} \color{black} 
In the $C^\infty$ category, examples of unstable elliptic equilibria can be obtained {\it via} the successive conjugation method,  the Anosov-Katok method. They can be obtained in two degrees of freedom or for $\R^2$ symplectomorphisms, provided the frequency at the elliptic equilibrium is not Diophantine (\cite{AK,FS,FSpoint}).  In three or more degrees of freedom, smooth examples with Diophantine frequencies can be obtained through a more sophisticated version of the successive conjugation method (see \cite{EFK,FSpoint}).   The Anosov-Katok examples are infinitely tangent to the rotation of frequency $\om$ at the fixed point and as such are very different in nature from our construction. In particular, KAM stability is in general excluded in these constructions. 

Again in the $C^\infty$ class but in the non-degenerate case, R. Douady gave examples in  \cite{douady} of Lyapunov unstable elliptic points for  symplectic diffeomorphisms on $\R^{2d}$ for any $d\geq 2$. Douady's examples can have any chosen   Birkhoff Normal Form at the origin provided its Hessian at the fixed point is non-degenerate. Douady's examples are modeled on the Arnold diffusion mechanism through chains of heteroclinic intersections between lower dimensional partially hyperbolic invariant tori that accumulate toward the origin. The construction consists of a countable number of compactly supported perturbations of a completely integrable flow, and as such was carried out only in the $C^\infty$ category. 

In \cite{KMV}, the authors admit Mather's proof of Arnold diffusion for a cusp residual set of nearly integrable convex
Hamiltonian systems in 2.5 degrees of freedom, and deduce from it that generically, a convex resonant totally elliptic
point of a symplectic map in 4 dimensions is Lyapunov unstable, and in fact has orbits that converge to the fixed point.

A third diffusion mechanism, closely related to Arnold diffusion mechanism, is Herman's synchronized diffusion, and is due to Herman, Marco and Sauzin \cite{MS}. It is based on the following coupling of two twist maps of the annulus (the second one being integrable with linear twist): at exactly one point $p$ of a well chosen periodic orbit of period $q$ on the first twist map, the coupling consists of pushing the orbits in the second annulus up on some fixed vertical $\Delta$ by an amount that sends an invariant curve whose rotation number is a multiple of $1/q$ to another one having the same property. The dynamics of the coupled maps on the line $\{p\}\times \Delta$ will thus drift at a linear speed. 

 The diffusion mechanism that underlies our constructions is inspired by all these three mechanisms described above but is quite different from each. In 3 degrees of freedom, we start with a product of rotators of frequencies $\om_1,\om_2,\om_3$, where $\om_1 \om_2<0$ 
and then  perturb this integrable Hamiltonian by adding a monomial of the $4$ coordinates $(x_1,y_1,x_2,y_2)$ that has a diffusive multi-saddle at the origin. The perturbation almost commutes with the rotators, provided  $\bar \om=(\om_1,\om_2)$ is very well approached by resonant vectors. The perturbed system has then an orbit that starts very close to the origin and that diffuses in the first four coordinates as is the case for the resonant system.  
We use the third action, $I_3=x_3^2+y_3^2$, that is invariant by the whole flow, as a coupling parameter.  

To get diffusion from arbitrary small neighborhoods of the origin, one has to add successive couplings that commute with increasingly better resonant approximations of $\bar \om$. The use of the third action as a coupling parameter, allows to isolate the effect of each successive coupling from the other ones. Indeed, to isolate the effect of each individual coupling from all the successive couplings is easy because these terms can be chosen to be extremely small compared to it. On the other hand, if we look at adequately small values of the third action, the effect of the prior coupling terms is tamed out due to Birkhoff averaging (we refer to Section \ref{sec.description} for a more precise description of the diffusion mechanism).   

In the case of $4$ degrees of freedom (or more) we can take the frequency vector of the equilibrium to be arbitrary, provided all the coordinates are not of the same sign, assuming for definiteness $\om_1\om_2<0$. The idea is that if $\omega_1$ is replaced by $\om_1+I_4$ then 
the vector $(\om_1+I_{4,n},\om_2)$ will be resonant, for a sequence $I_{4,n}\to 0$, which allows to adopt the  three degrees of freedom  diffusion strategy.



\bluegray
\section{\color{bluegray} Birkhoff Normal Forms}
\black \label{sec.BNF}

\medskip 

  




For $H$ as in \eqref{star}, $\om$ non-resonant, for all $N\geq 1$, there exists an exact symplectic transformation $\Phi_N={\rm Id} +O^2(x,y)$, and a polynomial 
$B_N$ of degree $N$ in the variables $I_1,\ldots,I_d$, such that 
$$H\circ \Phi_{N}(x,y)=B_{N}(I)+O_{2N+1}(x,y).$$
We say that  $f \in O^l(x,y)$ when $\partial_z f(0)=0$ for any multi-index $z$ on the $x_i$ and $y_i$ of size less or equal to $l$.  

There also exists a formal   exact symplectic transformation $\Phi_\infty={\rm Id} +O^2(x,y)$, where $O^2(x,y)$ is formal power series in $x$ and $y$ with no affine terms such that  
$$H\circ \Phi_{\infty}(x,y)=B_{\infty}(I)$$
where $B_{\infty}$ is a uniquely defined formal power series of the action variables $I_j$, called the Birkhoff Normal Form (BNF) at the origin.

For more details on the Birkhoff Normal Form
at a Diophantine, and more generally at any non-resonant elliptic equilibrium, one can consult for example \cite{SM}.

\bigskip

\noindent {\color{bluegray} \indent Divergent  Birkhoff Normal Forms.} When the radius of convergence of the formal power series $B_\infty(\cdot)$ is $0$, we say that the BNF diverges.


\bigskip 

\noindent {\color{bluegray} \indent Non-degenerate Birkhoff Normal Forms. }  Following \cite{russmann}, we say that $B_\infty$ is {\it R\"ussmann non-degenerate} or simply {\it non-degenerate} if there does not exist any vector  $\g$ such that for every   $I$ in some neighborhood of $0$
$$\langle \nabla B_\infty(I),\g\rangle=0.$$

In \cite{EFK} the following was proven 
\begin{theo}[\cite{EFK}]   -- \  \label{mB}  Let $H:(\R^{2d},0)\to \R$ be a real  analytic function of the form $(*)$
and assume that  $\omega$ is non-resonant. 
If  the BNF of $H$ at the origin  is non-degenerate, then  in any neighborhood of
$0\in\R^{2d}$ the set of real analytic KAM-tori for $X_H$ is of
positive Lebesgue measure and density one at $0$. 
\end{theo}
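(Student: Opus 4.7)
The plan is to combine a high-order Birkhoff normalization with a version of KAM adapted to R\"ussmann non-degeneracy, then use the smallness of the normal-form remainder at the equilibrium to upgrade a positive-measure KAM set to a density-one conclusion at the origin.

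First I would fix a large integer $N$ and apply the normalization recalled above to write $H\circ\Phi_{N}(x,y)=B_{N}(I)+R_{N}(x,y)$ with $R_{N}=O^{2N+1}(x,y)$; in symplectic action-angle coordinates $(\theta,I)$ valid away from the coordinate hyperplanes, the remainder satisfies $|R_{N}(\theta,I)|\leq C|I|^{N+1/2}$. A key preliminary observation is that R\"ussmann non-degeneracy of $B_{\infty}$ is determined by finitely many Taylor coefficients at $0$: if no vector $\gamma$ annihilates $\nabla B_{\infty}$ identically near $0$, there is a finite order $N_{0}$ at which this can already be certified, so the truncation $B_{N}$ is itself R\"ussmann non-degenerate for every $N\geq N_{0}$.

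Next I would rescale by $I=\rho^{2}\hat{I}$ on the ball of radius $\rho\to 0$ and divide the Hamiltonian by $\rho^{2}$. The rescaled integrable part $\rho^{-2}B_{N}(\rho^{2}\hat{I})$ stays R\"ussmann non-degenerate uniformly in $\rho$ on compact sets in the $\hat{I}$-variable (the non-degeneracy survives the addition of lower-order corrections of size $O(\rho^{2})$), while the rescaled perturbation has size $O(\rho^{2N-1})$. At this stage I would apply a quantitative R\"ussmann-type KAM theorem, which produces a Cantor family of real analytic Lagrangian invariant tori whose complement in the unit ball of the rescaled coordinates has measure at most $C\rho^{\kappa(N)}$, with $\kappa(N)\to\infty$ as $N\to\infty$.

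Undoing the rescaling, one obtains in every ball of radius $\rho$ around the origin a set of KAM tori whose $(x,y)$-Lebesgue measure coincides, up to a factor $O(\rho^{\kappa(N)})$, with the full volume of the ball, and the density-one property at $0$ follows by taking $N$ arbitrarily large. The main obstacle I expect is the KAM step itself in the R\"ussmann setting: since the image of the frequency map $I\mapsto\nabla B_{N}(I)$ collapses onto the single point $\omega$ as $I\to 0$, the usual small-divisor estimates must be performed along the curves traced by $\nabla B_{N}$ rather than across an open region of frequency space, and the measure of non-admissible actions is controlled only by a positive power of the perturbation size whose exponent depends on the precise degree of non-degeneracy of $\nabla B_{\infty}$. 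One must therefore track this exponent carefully through the scaling and choose $N$ large enough that the Taylor remainder $R_{N}$ beats the loss of measure.
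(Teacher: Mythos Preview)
This theorem is not proved in the present paper: it is quoted verbatim from \cite{EFK} (see also \cite{EFK_point}) as a black box, introduced by the sentence ``In \cite{EFK} the following was proven'', and is only invoked later to deduce KAM-stability in Theorems~\ref{theo.33} and~\ref{theo.44} once the BNF is shown to be non-degenerate. There is therefore no proof in the paper to compare your proposal against.

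That said, your sketch is a reasonable outline of the strategy actually used in \cite{EFK_point}: high-order Birkhoff normalization, the observation that R\"ussmann non-degeneracy is a finite-order condition on the Taylor jet of $B_\infty$, a rescaling $I=\rho^2\hat I$ that makes the remainder of size $O(\rho^{2N-1})$, and then an application of a R\"ussmann-type KAM theorem whose measure estimate improves with $N$. The point you flag as the main obstacle---that the frequency map $\nabla B_N$ collapses to the single vector $\omega$ as $I\to 0$, so one must control the measure of excluded actions via R\"ussmann's non-planarity condition rather than via a full-rank twist---is exactly the technical heart of \cite{EFK_point}, and it is not something one can dispatch in a paragraph. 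If you intend to reproduce the proof rather than cite it, you would need to invoke or reprove a quantitative KAM statement under R\"ussmann non-degeneracy with explicit dependence of the measure exponent on the non-degeneracy index; without that, your step ``apply a quantitative R\"ussmann-type KAM theorem'' is doing all the work and your sketch is a proof plan rather than a proof.
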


 Real analytic KAM-tori are invariant Lagrangian  tori on  which the flow  generated by $H$ is real analytically conjugated to a minimal translation flow on the torus $\R^d/\Z^d$. 

\bigskip 

\bigskip 

\noindent {\color{bluegray} \indent Calculating the BNF : Resonant and non-resonant terms.}

To simplify the computations, we prefer to use
the complex variables 
\begin{equation*} \xi_j=\frac{1}{\sqrt{2}} (x_j+iy_j), \quad \eta_j= \frac{1}{\sqrt{2}} (x_j-iy_j)\end{equation*}
Notice in particular that the
actions become $I_j=\xi_j \eta_j$, making it very simple to detect the monomials $\xi_{u_1}\ldots \xi_{u_k}\eta_{v_1}\ldots \eta_{v_{k'}}$ that only depend on the actions, since these are exactly the monomials for which $k=k'$ and $\{u_1,\ldots,u_k\}=\{v_1,\ldots,v_k\}$. 

Note also that in these variables $H_\om$ as in \eqref{star} reads as $\sum \om_j \xi_j \eta_j$. We easily verify that, in these variables,
the Poisson bracket is given by 
\begin{equation*} \{F,G\} = i \sum_{j} \frac{\partial F}{\partial \xi_j}\frac{\partial G}{\partial \eta_j}-\frac{\partial F}{\partial \eta_j}\frac{\partial G}{\partial \xi_j},\end{equation*} 
while the Hamiltonian equations are given by 
\begin{equation*}   \label{Hcomplex}   \begin{cases}\dot{\xi}_j = -i \partial_{\eta_j} H(\xi,\eta) \\ \dot{\eta}_j = i \partial_{\xi_j} H(\xi,\eta)  \end{cases}\end{equation*} 
We will say that a function $F$ defined in the variables $\xi$ and $\eta$ is real when $F(\xi,\bxi)$ is real, which means that in the original variables $(x,y)$, $F$ is real valued. 

Finally observe that for a Hamiltonian as in \eqref{star}, and since $\om$ is non-resonant, it is easy to eliminate by conjugacy all the terms $\xi_{u_1}\ldots \xi_{u_k}\eta_{v_1}\ldots \eta_{v_{k'}}$ that do not depend only on the actions. Indeed, if we take 
$$\chi=i \frac{1}{\om_{u_1}+\ldots+\om_{u_k}-\om_{v_1}-\ldots-\om_{v_{k'}} } \xi_{u_1}\ldots \xi_{u_k}\eta_{v_1}\ldots \eta_{v_{k'}}$$
we get for the time one map $\Phi^1_{\chi}$ of the Hamiltonian flow of $\chi$, also called the Lie transform associated to $\chi$,  
$$H \circ \Phi^1_{\chi}= H+\{H_\om,\chi\}+\{H-H_\om,\chi\}+\frac{1}{2!}\{\{H,\chi\},\chi\}+\ldots  $$
and observe that $\{H_\om,\chi\}=-\xi_{u_1}\ldots \xi_{u_k}\eta_{v_1}\ldots \eta_{v_{k'}}$, while the other terms that appear due to the composition by $\Phi^1_{\chi}$ are of higher degree than $k+k'$. This is why in the case of a non-resonant vector $\omega$, we call the action dependent monomials {\it resonant} and the others non-resonant. The reduction to the BNF is done progressively by eliminating  non-resonant monomials of higher and higher degree. In the case of a resonant frequency vector $\omega$ one cannot formally conjugate the Hamiltonian to an action dependent formal power series, because monomial terms $\xi_{u_1}\ldots \xi_{u_k}\eta_{v_1}\ldots \eta_{v_{k'}}$  that do not only depend on the actions may be {\it resonant with $\omega$}, namely when ${\om_{u_1}+\ldots+\om_{u_k}-\om_{v_1}-\ldots-\om_{v_{k'}} }=0$. Such resonant frequencies and resonant monomials that do not only depend on the actions will be instrumental in our constructions.
 
\bluegray
\section{\color{bluegray}  Explicit Hamiltonians with Lyapunov unstable elliptic equilibria} \label{sec.statements}
\black
In this section, we give the explicit constructions that yield Theorem {\bg A}. 


Starting from $4$ degrees of freedom, it is possible to give 
examples with arbitrary frequency vectors, in particular Diophantine. Recall that $\om$ is said to be Diophantine if  there exists $\gamma,\tau>0$ such that $|\langle k,\omega\rangle|\geq \gamma |k|^{-\tau}$, for all $k\in\Z^d- \{0\}$, with $\langle \cdot\rangle$ being the canonical scalar product and $|\cdot|$ its associated norm. 

In his ICM talk of 1998 \cite{herman_icm}, Herman conjectured that a real analytic elliptic equilibrium with a Diophantine frequency vector must be accumulated by a set of positive measure of KAM tori. This conjecture is still open.  However, our examples can be chosen such that the Birkhoff Normal Form is non-degenerate, which implies KAM-stability as established in \cite{EFK_point} (see Theorem \ref{mB} above).  

In all the sequel,  we denote $|\cdot|$ the Euclidean norm on $\R^{2d}$, indifferently on the value of $d$ that will be clear from the context.  We also denote indifferently $B_r$ the  Euclidean  ball of radius $r$ in $\R^{2d}$ for any value of $d$. For $k \in \N$, we denote by  $\|H\|_{C^k(B_{R})}$ the $C^k$ norm of $H$ on the ball $B_R$.

\bluegray \subsection{Lyapunov unstable elliptic equilibrium in three degress of freedom} \black \label{sec31}

\medskip


We suppose $\om\in \R^3$ is such that there exists a sequence $  \{(k_n,l_n)\} \in \N^*\times \N^*$ satisfying\footnote{The requirement of double exponential approximations is not uncommon in instability results in real analytic and holomorphic dynamics as is the case for example in \cite{PM}.} 
\begin{equation} \label{liouville} \tag{${\mathcal L}$} 0<|k_n \om_1+l_n\om_2|<e^{-e^{n^4(k_n+l_n)}}.\end{equation}
The set of vectors satisfying \eqref{liouville} is clearly a $G_\delta$-dense set, since resonant vectors form a dense set in $\R^2$. Up to extracting we can also assume that 
\begin{equation}\tag{${\mathcal NR}$} k_{n} \geq \max_{0<k+l\leq k_{n-1}+l_{n-1}} e^{\frac{1}{|k \om_1+l\om_2|}}.\end{equation}

For $n \in \N$ we define on $\R^4$ the following real polynomial Hamiltonians 
\begin{equation} \label{def.Fn} F_n(x_1,x_2,y_1,y_2)=\xi_1^{k_n}\xi_2^{l_n} +\eta_1^{k_n}\eta_2^{l_n}, \end{equation} 

 We finally define a real entire Hamiltonian on $\R^6$ 
\begin{equation*}H(x,y)=H_\om(x,y)+\sum_{n\in \N} e^{-n(k_n+l_n)} I_3 F_n(x_1,x_2,y_1,y_2)\end{equation*}

\bigskip

\begin{theo} -- \ \label{theo.3} The origin is a Lyapunov unstable equilibrium of the Hamiltonian flow $\Phi^t_H$ of $H$. More precisely, for every $n \geq 1$, there exists $z_n \in \R^6$, such that $|z_n|\leq \frac{1}{n}$, and $\tau_n \geq 0$ such that $|\Phi^{\tau_n}_H(z_n)|\geq n$. 

Moreover, the Birkhoff normal form of $H$ at the origin is divergent.

\end{theo}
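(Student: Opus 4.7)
The plan is to produce, for each $n$, a specific initial condition $z_n$ at distance $\sim 1/n$ from the origin whose orbit under $\Phi^t_H$ is well approximated on a long time interval $[0,\tau_n]$ by the flow of the \emph{resonant model} $H_n:=H_\omega+e^{-n(k_n+l_n)}I_3F_n$, which, at the exact resonance $k_n\omega_1+l_n\omega_2=0$, admits an explicit orbit whose $(I_1,I_2)$-coordinates grow monotonically to size $\gtrsim n^2$. A crucial first simplification is that $I_3=\xi_3\eta_3$ is preserved by $\Phi^t_H$, since $\xi_3$ and $\eta_3$ appear in $H$ only through $I_3$; so we may fix $I_3=\delta$ and reduce to a parametric family of flows on the slice $\{(x_1,y_1,x_2,y_2)\}$.

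In the resonance-adapted symplectic coordinates $\psi_1=k_n\theta_1+l_n\theta_2$, $J_1=I_1/k_n$, $\psi_2=\theta_2$, $J_2=I_2-(l_n/k_n)I_1$, at exact resonance $J_2$ is conserved and the one-degree-of-freedom dynamics on the $(\psi_1,J_1)$-slice is generated by
\[
\tilde H_n(\psi_1,J_1)=2e^{-n(k_n+l_n)}\delta\,(k_nJ_1)^{k_n/2}(l_nJ_1+J_2)^{l_n/2}\cos\psi_1.
\]
On the half-line $\{\psi_1=\pi/2,\,J_1>0\}$ one reads off $\dot\psi_1=0$ and $\dot J_1>0$, so the resonant flow slides monotonically along it. Taking $z_n$ with $I_1(z_n)=I_2(z_n)=I_3(z_n)=\rho_n:=1/n^2$ and $\psi_1(z_n)=\pi/2$ yields $|z_n|\le 1/n$, and a direct integration of the $1$-d ODE shows that $J_1$ grows from $\rho_n/k_n$ to $n^2/k_n$ in a time $\tau_n$ of order $n^{k_n+l_n}e^{n(k_n+l_n)}$, by which time $|\Phi^{\tau_n}_{H_n}(z_n)|\gtrsim n$.

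The real work is to show that the orbit of the full $H$ issuing from $z_n$ tracks the resonant-model trajectory closely on $[0,\tau_n]$. Three sources of error need to be controlled. First, the nonzero resonance defect $\alpha_n:=k_n\omega_1+l_n\omega_2$ forces $\psi_1$ to drift at rate $\alpha_n$; by $(\mathcal L)$ the product $|\alpha_n|\tau_n$ is doubly exponentially small, so $\psi_1$ stays essentially at $\pi/2$. Second, the tail $\sum_{m>n}e^{-m(k_m+l_m)}I_3F_m$ is negligible thanks to the fast decay of its coefficients. The \emph{main obstacle} is the third source: the low-order terms $F_m$ with $m<n$, whose amplitudes at scale $\sqrt{\rho_n}$ are much larger than that of $F_n$. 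I would handle them by a preliminary Birkhoff-type normalization killing all their non-resonant monomials via generators $\chi_m\sim e^{-m(k_m+l_m)}(i\alpha_m)^{-1}I_3\xi_1^{k_m}\xi_2^{l_m}+\mathrm{c.c.}$; by $(\mathcal{NR})$ one has $|\alpha_m|^{-1}\le\ln k_n$, so the conjugation is only polynomially close to identity in $k_n$, and the low-degree non-resonant content of $H$ is traded for action-dependent monomials (which preserve $I_1,I_2$ up to a harmless tangential rotation) plus a high-degree remainder. The $n$-th term is essentially untouched, and assembling the estimates gives $|\Phi^{\tau_n}_H(z_n)|\ge n$ for large $n$.

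For the divergence of the Birkhoff normal form, I would carry out the single formal Lie transform generated by $\chi_n\sim e^{-n(k_n+l_n)}(i\alpha_n)^{-1}I_3\xi_1^{k_n}\xi_2^{l_n}+\mathrm{c.c.}$, which cancels the monomial $e^{-n(k_n+l_n)}I_3F_n$ and whose total $\chi_n^2$ contribution equals
\[
\tfrac12\{e^{-n(k_n+l_n)}I_3F_n,\chi_n\}=-\frac{e^{-2n(k_n+l_n)}}{\alpha_n}I_3^2\bigl(k_n^2 I_1^{k_n-1}I_2^{l_n}+l_n^2 I_1^{k_n}I_2^{l_n-1}\bigr).
\]
This is action-dependent with coefficient of modulus at least $\exp\!\bigl(\tfrac12 e^{n^4(k_n+l_n)}\bigr)$ by $(\mathcal L)$. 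A degree count using the huge gap between $k_{n-1}+l_{n-1}$ and $k_n+l_n$ forced by $(\mathcal{NR})$ rules out any competing contribution to the coefficient of $I_1^{k_n-1}I_2^{l_n}I_3^2$ at comparable magnitude: lower-index Birkhoff steps carry only polynomially large divisors $|\alpha_m|^{-1}\le\ln k_n$ together with super-exponentially small coefficients $e^{-m(k_m+l_m)}$, while higher-index terms have super-exponentially small $e^{-m(k_m+l_m)}$. Hence the coefficient of $I_1^{k_n-1}I_2^{l_n}I_3^2$ in $B_\infty$ retains doubly exponential modulus in $n$, its $(k_n+l_n+1)$-th root tends to infinity, and the radius of convergence of $B_\infty$ is zero.
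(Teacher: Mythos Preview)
Your outline follows the paper's strategy closely (Birkhoff-normalize away the low-index $F_m$, then compare with the resonant model), and your treatment of the BNF divergence is essentially the paper's argument: after one Lie step with $\chi_n$ you isolate the resonant term $-\dfrac{e^{-2n(k_n+l_n)}}{k_n\omega_1+l_n\omega_2}\,I_3^2\bigl(k_n^2 I_1^{k_n-1}I_2^{l_n}+l_n^2 I_1^{k_n}I_2^{l_n-1}\bigr)$, and the degree gap forced by $(\mathcal{NR})$ shows this is the exact $I_3^2 I_1^{k_n-1}I_2^{l_n}$ coefficient of the BNF.

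There is, however, a genuine gap in the instability argument: your choice $I_3=\rho_n=1/n^2$ cannot work, and the step where you dismiss the action-dependent remainder as a ``harmless tangential rotation'' is precisely where it fails. After you conjugate away $F_m$ for $m<n$, the new integrable terms are of order $I_3^2$ times polynomials in $I_1,I_2$ (with coefficients controlled only by $\ln k_n$ via $(\mathcal{NR})$). These terms preserve $I_1,I_2$, but they contribute to $\dot\psi_1=k_n\partial_{I_1}+l_n\partial_{I_2}$ applied to the Hamiltonian. At $I_3=1/n^2$ this extra drift in $\psi_1$ is of order $n^{-4}$ times quantities that are only polynomially small in $n$, hence astronomically larger than both the resonance defect $\alpha_n<e^{-e^{n^4(k_n+l_n)}}$ and the diffusion rate $I_3 a_n|\nabla F_n|\sim e^{-n(k_n+l_n)}n^{-(k_n+l_n+1)}$. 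The line $\{\psi_1=\pi/2\}$ is therefore not even approximately invariant: $\psi_1$ winds many times before any appreciable change in $J_1$ occurs, so the monotone growth of $J_1$ that you read off the model is destroyed. A second manifestation of the same problem is that with $I_3=1/n^2$ the generator $\widehat\chi_{n-1}$ itself has size $\sim I_3\ln k_n$, and since $(\mathcal L)$ together with $(\mathcal{NR})$ force $\ln k_n\ge e^{e^{(n-1)^4(k_{n-1}+l_{n-1})}}$, the conjugation is not close to the identity at all.

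The paper's fix is exactly to choose $I_3=\mathbf I:=e^{-e^{n^3(k_n+l_n)}}$, doubly exponentially small. Then $\|\widehat\chi_{n-1}\|_{C^k}\le \mathbf I^{0.9}$, the $O(I_3^2)$ remainder becomes $a^2G$ with $a=\mathbf I\,a_n$ and $\|G\|_{C^1}\le e^{3n(k_n+l_n)}$, and a Gronwall comparison with the exactly resonant system $H_{\omega'}+aF_n$ (using $|\omega_1-\omega_1'|<e^{-e^{n^4(k_n+l_n)}}\le a^2$) runs over the full diffusion time $t_n/a$ because $a$ is small enough to absorb the factor $e^{C A t_n}$. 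Your scheme becomes correct once you make this choice of $I_3$; the point is that the ``harmless'' integrable remainder is only harmless when $I_3$ is chosen so small that $I_3^2$ beats $I_3 a_n$ on the relevant timescale.
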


We can modify the definitions of the Hamiltonians $H_\om$ and $H$  on $\R^6$ as follows 
\begin{align*} \tH_\om(x,y)&= (\om_1+I_3^3) I_1+(\om_2+I_3^4) I_2+\om_3I_3,\\
\tH(x,y)&=\tH_\om(x,y)+\sum_{n\in \N} e^{-n(k_n+l_n)} I_3 F_n(x_1,x_2,y_1,y_2).\end{align*}

We can assume that $k_0+l_0>10$, hence $\tH_\om$ gives the BNF of $\tH$ at the origin up to order $5$ in the action variables. But $\nabla \tH_\om(I)= (\om_1+I_3^3, \om_2+I_3^4,\om_3+ 3I_3^2I_1+4I_3^3I_2)$ is clearly non-degenerate, and this implies that the BNF of $\tH$ is non-degenerate. 
We then have the following 

\bigskip 

\begin{theo} -- \ \label{theo.33} The origin is a Lyapunov unstable equilibrium of the Hamiltonian flow $\Phi^t_{\tH}$ of $\tH$. Moreover, the Birkhoff normal form of $\tH$ at the origin is non-degenerate, hence the equilibrium is KAM-stable. 

\end{theo}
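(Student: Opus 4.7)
The plan is to split the statement into Lyapunov instability and KAM-stability, treating each by reduction to results already at hand. The instability of $\tH$ at the origin will follow from a mild adaptation of Theorem \ref{theo.3}, exploiting that $I_3$ is a first integral of $\tH$: indeed $F_n$ does not depend on $(x_3, y_3)$, and every other term of $\tH$ depends on $(x_3, y_3)$ only through $I_3$, so $\{I_3, \tH\} = 0$. Fixing a level set $\{I_3 = c\}$, the reduced Hamiltonian equals, up to an additive constant,
\[\tH|_{I_3 = c} = (\om_1 + c^3) I_1 + (\om_2 + c^4) I_2 + c \sum_{n} e^{-n(k_n + l_n)} F_n(x_1, y_1, x_2, y_2),\]
which has exactly the form of the reduction of $H$ to $\{I_3 = c\}$ used in Theorem \ref{theo.3}, save that the frequency pair is shifted from $(\om_1, \om_2)$ to $(\om_1 + c^3, \om_2 + c^4)$. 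The Liouville-type bound $(\mathcal{L})$ for the shifted frequencies degrades by at most $(k_n + l_n) c^3$, which remains negligible compared to $e^{-e^{n^4(k_n + l_n)}}$ provided $c$ is taken doubly-exponentially small in $n$. Since $I_3$ appears only as a multiplicative coupling scalar in the diffusion mechanism, the construction of the diffusion orbit in Theorem \ref{theo.3} is compatible with such a small choice of $I_3(\tilde{z}_n) = c_n$, and rerunning the argument verbatim then produces $\tilde{z}_n \in B_{1/n}$ and $\tau_n > 0$ with $|\Phi^{\tau_n}_{\tH}(\tilde{z}_n)| \geq n$.

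For the Birkhoff normal form, observe that $\tH_\om$ is a polynomial in the actions alone, hence already in normal form, and that each perturbation $e^{-n(k_n + l_n)} I_3 F_n$ has total degree at least $k_0 + l_0 + 2 > 12$ in $(x, y)$. The Lie transforms used to build the BNF therefore modify $\tH_\om$ only at degrees $\geq 12$ in $(x, y)$, i.e.\ $\geq 6$ in the actions, so $B_\infty(I) = \tH_\om(I) + O_6(I)$ where $O_6(I)$ stands for a formal power series with no terms of degree $<6$ in the actions. The Rüssmann non-degeneracy then reduces to showing that no nonzero $\gamma \in \R^3$ satisfies $\langle \nabla B_\infty(I), \gamma \rangle \equiv 0$; using
\[\nabla \tH_\om(I) = (\om_1 + I_3^3,\ \om_2 + I_3^4,\ \om_3 + 3 I_3^2 I_1 + 4 I_3^3 I_2),\]
the low-order monomials in $\langle \nabla B_\infty(I), \gamma \rangle$ come entirely from $\tH_\om$, and matching the coefficients of $I_3^2 I_1$, $I_3^3$ and $I_3^4$ forces $\gamma_3 = 0$, then $\gamma_1 = 0$, then $\gamma_2 = 0$. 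Non-degeneracy of the BNF is therefore immediate, and Theorem \ref{mB} delivers the claimed KAM-stability.

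The delicate step will be verifying that $I_3$ can indeed be made doubly-exponentially small on the stage-$n$ diffusion orbit of Theorem \ref{theo.3} without destroying the diffusion: the time scale $\tau_n$ of the diffusion mechanism depends on the coupling factor $e^{-n(k_n + l_n)} I_3$, so shrinking $I_3$ merely lengthens $\tau_n$ but should not interfere qualitatively with the construction. Once this flexibility in the proof of Theorem \ref{theo.3} is pinned down, the remainder of the argument for $\tH$ is essentially formal.
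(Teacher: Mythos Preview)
Your approach is essentially the paper's, with one cosmetic difference and one minor slip. The paper does not shift the frequencies: it simply observes that $\tH-H=I_3^3(I_1+I_3I_2)$, and since the proof of Theorem~\ref{theo.3} already fixes $I_3={\bf I}=e^{-e^{n^3(k_n+l_n)}}$, this difference is $O({\bf I}^3)\ll a^2$ on $\hat B_{2n}$ and can be absorbed directly into the $a^2\tilde G$ term of Corollary~\ref{cor.diff}. This keeps $\om$ fixed and reuses Corollary~\ref{cor.diff} verbatim; your frequency-shift formulation is equivalent but forces you to re-verify $(\mathcal L)$ and $(\mathcal{NR})$ for an $n$-dependent frequency.

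Two small points. First, your ``delicate step'' is already done: in the proof of Theorem~\ref{theo.3} the level $I_3={\bf I}$ is precisely the doubly-exponentially small value you want, so nothing needs to be re-checked. Second, your comparison is off: with $c={\bf I}$ the degradation $(k_n+l_n)c^3\sim e^{-3e^{n^3(k_n+l_n)}}$ is \emph{not} smaller than $e^{-e^{n^4(k_n+l_n)}}$. What actually matters (and what the paper uses) is that it is smaller than $a^2\sim e^{-2e^{n^3(k_n+l_n)}}$, which suffices for Corollary~\ref{cor.diff}. Your BNF non-degeneracy argument matches the paper's.
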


\bluegray
\subsection{\color{bluegray} Lyapunov unstable elliptic equilibrium in four degrees of freedom} \label{sec32}
\black

In $4$ degrees of freedom (or more), our method yields unstable elliptic equilibria for any frequency vector, provided its coordinates are not all of the same sign. Suppose for instance that $\om=(\om_1,\ldots,\om_4)$ is such that $\om_1 \om_2<0$. 

We assume  $(\om_1,\om_2)$  non-resonant (the resonant case follows from Corollary  \ref{rem.resonant} below). By Dirichlet principle, there exists a sequence $(k_n,l_n)\in \N^*\times \N^*$  such that 
\begin{equation*} |k_n \om_1+ l_n\om_2| <\frac{1}{k_n^2}. \end{equation*}
WLOG, we assume that $k_n \om_1+ l_n\om_2<0$. Then, for  $I_{4,n}=  -(k_n \om_1+ l_n\om_2)/k_n \in  (0,\frac{1}{k_n^3})$, it holds  
that 
\medskip 

\begin{equation} \label{rational} \tag{${\mathcal R}$} k_n (\om_1+I_{4,n})+ l_n\om_2 =0. \end{equation}

Since $(\om_1,\om_2)$ is non-resonant,  we can, up to extracting, additionally ask that for all $(k,l)\in \N^2\setminus \{0,0\}$ such that $k+l\leq k_{n-1}+l_{n-1}$, we have $k(\om_1+I_{4,n})+l \om_2 \neq 0$ and
\begin{equation} \label{lower} \tag{${\mathcal NR}$} k_{n}\geq  \max_{0<k+l\leq k_{n-1}+l_{n-1}}  e^{\frac{1}{|k (\om_1+I_{4,n})+l\om_2|}}, \quad k_n\geq e^{e^{e^{n^4(k_{n-1}+l_{n-1})}}}.\end{equation} 

We define the following real entire Hamiltonians on $\R^8$ 
\begin{align*} H_\om(x,y)&=  (\om_1+I_4)I_1+ \sum_{j=2}^4 \om_j I_j, \\
H(x,y)&=H_\om(x,y)+\sum_{n\in \N} e^{-n(k_n+l_n)} I_3 F_n(x_1,x_2,y_1,y_2)\end{align*}

\begin{theo} -- \ \label{theo.4} The origin is a Lyapunov unstable equilibrium for the Hamiltonian flow of $H$.  More precisely,  for every $n \geq 1$,  there exists $z_n \in \R^8$, such that $|z_n|\leq \frac{1}{n}$, and $\tau_n \geq 0$ such that $|\Phi^{\tau_n}_H(z_n)|\geq n$.

Moreover, the Birkhoff normal form of $H$ at the origin is divergent.

\end{theo}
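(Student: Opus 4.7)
The starting observation is that $I_3$ and $I_4$ are first integrals of $\Phi^t_H$: no $F_m$ involves $(x_3,y_3,x_4,y_4)$, and $H_\om$ is a function of actions only. The whole analysis therefore takes place on a fixed invariant slice $\Sigma_n:=\{I_3=\rho_n,\ I_4=I_{4,n}\}$, with $\rho_n>0$ to be tuned. Restricted to $\Sigma_n$ and up to an additive constant, the flow is generated by the effectively two-degree-of-freedom Hamiltonian
\[K_n(x_1,y_1,x_2,y_2)=(\om_1+I_{4,n})I_1+\om_2 I_2+\rho_n\sum_{m\geq 1}e^{-m(k_m+l_m)}F_m,\]
in which the $n$-th summand will drive the diffusion and all other summands will be controlled perturbatively.

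I first analyze the core $K_n^\flat:=(\om_1+I_{4,n})I_1+\om_2I_2+\rho_ne^{-n(k_n+l_n)}F_n$. By the exact resonance $(\mathcal{R})$, $F_n$ Poisson-commutes with the linear rotator, so $J:=l_nI_1-k_nI_2$ is a second integral. Introducing canonical coordinates $(\psi,M)$ adapted to the resonance (with $\psi=k_n\theta_1+l_n\theta_2$) and restricting to the invariant slice $\{J=0\}$, where $I_1=k_nM$ and $I_2=l_nM$, the linear part vanishes by $(\mathcal{R})$ and $K_n^\flat$ reduces to a one-degree-of-freedom Hamiltonian proportional to $M^{(k_n+l_n)/2}\cos\psi$. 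Its level curves $\{M^{(k_n+l_n)/2}\cos\psi=c\ne 0\}$ carry an orbit along which $M$ runs from arbitrarily small to arbitrarily large values in finite time, since $\int_0^{\pi/2}\cos^{-2/(k_n+l_n)}\psi\,d\psi$ converges whenever $k_n+l_n>2$. By choosing $\rho_n$ and the initial data on such a level curve appropriately, one obtains an orbit of $K_n^\flat$ starting with $|z|\leq 1/(2n)$ and exiting a ball of radius $2n$ at some finite time $\tau_n$.

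I then upgrade this to an orbit of $\Phi^t_H$ by controlling $K_n-K_n^\flat$ on $\Sigma_n$. For $m<n$, $(\mathcal{NR})$ yields $|k(\om_1+I_{4,n})+l\om_2|\geq 1/\log k_n$ for every $0<k+l\leq k_{n-1}+l_{n-1}$, so $F_m$ is non-resonant on $\Sigma_n$; a single Birkhoff Lie-transform eliminates it, leaving a remainder polynomially smaller in $\log k_n$ than $e^{-m(k_m+l_m)}$, which in turn is negligible compared to the driving $e^{-n(k_n+l_n)}$ by the hyperexponential growth of $k_n$. For $m>n$, the coefficient $e^{-m(k_m+l_m)}$ is hyperexponentially smaller than $e^{-n(k_n+l_n)}$ and dominates the polynomial growth of $F_m$ on the bounded range of $M$ traversed by the orbit. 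A Gronwall-type shadowing on $\Sigma_n$ then forces the true orbit to stay close to the pendulum orbit throughout $[0,\tau_n]$, so $|\Phi^{\tau_n}_H(z_n)|\geq n$. The main technical obstacle lies precisely in this step: the pendulum orbit becomes slow near $\psi=\pi/2$, and one must exploit the structure of the resonant coordinates $(\psi,M)$ to localize the comparison to the actual range visited, rather than use a naive $L^\infty$-Gronwall estimate on a fixed domain.

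For the divergence of the BNF at the origin, the normalization is computed relative to the quadratic part $\sum_j\om_jI_j$, so the action-dependent quartic term $I_4I_1$ present in $H_\om$ is itself treated as a perturbation. The elimination of the non-resonant monomial $I_3\xi_1^{k_n}\xi_2^{l_n}$ requires a generator with small divisor $k_n\om_1+l_n\om_2=-k_nI_{4,n}$; its Poisson bracket with $I_4I_1$ produces the new non-resonant monomial $I_4I_3\xi_1^{k_n}\xi_2^{l_n}$, which must in turn be eliminated with the same divisor, and iterating yields a cascade of monomials $I_4^j I_3\xi_1^{k_n}\xi_2^{l_n}$ whose generators carry coefficients of order $e^{-n(k_n+l_n)}/(k_nI_{4,n}^{j+1})$. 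Pairing each such generator with its complex conjugate through a Poisson bracket produces action-dependent, hence BNF-persistent, monomials of type $I_4^{2j}I_3^2I_1^{k_n-1}I_2^{l_n}$ of degree $d_j=4j+2(k_n+l_n)+2$ and with BNF coefficient of order $e^{-2n(k_n+l_n)}I_{4,n}^{-(2j+2)}$. Letting $j\to\infty$ for fixed $n$ gives $|c_{d_j}|^{1/d_j}\to I_{4,n}^{-1/2}\geq k_n^{3/2}$, and then letting $n\to\infty$ forces the radius of convergence of the BNF at the origin to be zero.
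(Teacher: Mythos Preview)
Your overall architecture matches the paper's: fix $I_4=I_{4,n}$ to force exact resonance, fix $I_3=\rho_n$, isolate the $n$-th summand as the driver, and shadow via Gronwall. Your pendulum reduction on the slice $\{l_nI_1=k_nI_2\}$ is a legitimate alternative to the paper's invariant-line computation in Proposition~\ref{prop.diffuse}. But there is a genuine gap in how you dispose of the terms $F_m$ with $m<n$. You claim that a single Lie step leaves ``a remainder polynomially smaller in $\log k_n$ than $e^{-m(k_m+l_m)}$, which in turn is negligible compared to the driving $e^{-n(k_n+l_n)}$ by the hyperexponential growth of $k_n$.'' This is backwards: for $m<n$ the coefficient $e^{-m(k_m+l_m)}$ is \emph{enormously larger} than $e^{-n(k_n+l_n)}$ (the latter is hyperexponentially small in $k_n$), so no polynomial-in-$\log k_n$ gain bridges that gap. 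The mechanism the paper uses, and which you never invoke, is structural: every $F_m$ carries a prefactor $I_3$, and the generator $\chi_m$ that kills $\rho_n a_m F_m$ is itself of order $\rho_n$; hence the remainder $\{\rho_n\sum_j a_jF_j,\chi_m\}$ and all higher brackets pick up an \emph{extra} factor of $\rho_n$, so the post-conjugation error is $O(\rho_n^2)$ on $\hat B_{2n}$, uniformly over $m<n$. This beats the driver $\rho_n a_n$ only if $\rho_n\ll a_n$, and the paper fixes $\rho_n=e^{-e^{n^3(k_n+l_n)}}$ precisely for this reason (and so that the Gronwall constant $a^2ATe^{aAT}$ with $a=\rho_n a_n$ and $T\sim\rho_n^{-1}$ stays below $1/4$). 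You leave $\rho_n$ ``to be tuned'' but never tune it, and without the $I_3^2$ structure your perturbative step does not close.

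Your BNF argument is essentially the paper's, expanded term by term in $I_4$. The paper proceeds more compactly by performing the Lie reduction with $I_4$-dependent denominators $k_j(\om_1+I_4)+l_j\om_2$, so that the coefficient of $I_3^2I_1^{k_j-1}I_2^{l_j}$ in the BNF is directly identified as $-a_j^2k_j^2/\bigl(k_j(\om_1+I_4)+l_j\om_2\bigr)$, a power series in $I_4$ with radius of convergence at most $|k_j\om_1+l_j\om_2|/k_j\to 0$. Your cascade of monomials $I_4^jI_3\xi_1^{k_n}\xi_2^{l_n}$ recovers exactly the Taylor coefficients of this rational function (up to bookkeeping of a factor $k_n$ and a shift in the exponent), but to make it rigorous you still owe the verification --- carried out in the paper through the decomposition $I'+II'+III'$ --- that no other chain of brackets contributes to the monomial $I_4^{p}I_3^2I_1^{k_n-1}I_2^{l_n}$.
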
 

\medskip 

 We can modify the definition of the Hamiltonian on $\R^8$ as follows
\begin{align*} \tH_\om(x,y)&= (\om_1+I_4) I_1+(\om_2+I_4^2)I_2+(\om_3+I_4^3)I_3+\om_4 I_4,\\
\tH(x,y)&=\tH_\om(x,y)+\sum_{n\in \N} e^{-n(k_n+l_n)} I_3 F_n(x_1,x_2,y_1,y_2),\end{align*}
where $(k_n,l_n)\in \N^*\times \N^*$ are chosen so that \eqref{rational}and \eqref{lower} hold with $\om_2$ replaced by $(\om_2+I_{4,n}^2)$.

Here also, it is clear that $\nabla \tH_\om(I)$ is non-degenerate. We have the following.
\bigskip 

\begin{theo} -- \ \label{theo.44} The origin is a Lyapunov unstable equilibrium of the Hamiltonian flow $\Phi^t_{\tH}$ of $\tH$. Moreover, the Birkhoff normal form of $\tH$ at the origin is non-degenerate, hence the equilibrium is KAM-stable.
  \end{theo}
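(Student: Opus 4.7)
The plan is to combine the diffusion argument of the preceding theorem on $H$ in four degrees of freedom with a short direct computation on the integrable part $\tH_\om$ of $\tH$. The two conclusions are essentially decoupled: Lyapunov instability needs only the existence, for each $n$, of a small value $I_{4,n}$ of the fourth action at which the reduced frequency vector $(\partial_{I_1}\tH_\om,\partial_{I_2}\tH_\om)$ becomes $(k_n,l_n)$-resonant, while KAM-stability needs only R\"ussmann non-degeneracy of the BNF together with Theorem \ref{mB}.

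For the instability I would follow the proof of the preceding theorem verbatim, after adapting the resonance condition $(\mathcal R)$ to
\[
k_n(\om_1+I_{4,n})+l_n(\om_2+I_{4,n}^2)=0,
\]
which is a quadratic in $I_{4,n}$ whose linear coefficient $k_n$ dominates, hence admits a small real root $I_{4,n}=O(1/k_n^3)$; the condition $(\mathcal NR)$ is adjusted identically by the same extraction procedure. By this choice the Poisson bracket $\{\tH_\om,F_n\}$ vanishes on $\{I_4=I_{4,n}\}$, so $F_n$ is $\tH_\om$-invariant on the resonant level set and the coupling with $I_3$ produces drift of $I_1$ and $I_2$ exactly as before. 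The extra action-dependent terms $I_4^2 I_2$ and $I_4^3 I_3$ in $\tH_\om$ affect the construction only through the $O(1/k_n^3)$ shift of $I_{4,n}$ and a tiny modification of the unperturbed frequencies, both of which are absorbed in the slack of the averaging estimates that control the residual effect of the other couplings $F_m$, $m\neq n$.

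For the KAM-stability part, my plan is to verify R\"ussmann non-degeneracy of the BNF of $\tH$ at the origin and then to apply Theorem \ref{mB}. Since $k_0+l_0$ can be taken arbitrarily large (say $\geq 10$), each perturbation term $e^{-n(k_n+l_n)}I_3F_n$ is a polynomial in $(x,y)$ of degree $\geq 12$, so its contribution to the BNF starts at order $\geq 6$ in the action variables $I$. Hence the BNF of $\tH$ equals $\tH_\om(I)$ up to a formal remainder of order $\geq 6$ in $I$. One then computes
\[
\nabla \tH_\om(I)=\bigl(\om_1+I_4,\ \om_2+I_4^2,\ \om_3+I_4^3,\ \om_4+I_1+2I_2 I_4+3I_3 I_4^2\bigr),
\]
and observes that if $\g\in\R^4$ satisfies $\langle \nabla \tH_\om(I),\g\rangle\equiv 0$, then the coefficient of $I_1$ forces $\g_4=0$, after which the identity reduces to a polynomial in $I_4$ alone whose successive coefficients force $\g_3=\g_2=\g_1=0$. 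Since $\nabla \tH_\om$ has entries of degree at most $3$ in $I$ while the tail of the BNF contributes to $\nabla B_\infty$ only in degrees $\geq 5$, the formal identity $\langle \nabla B_\infty,\g\rangle\equiv 0$ forces $\langle \nabla \tH_\om,\g\rangle\equiv 0$ by degree separation, and the BNF of $\tH$ is R\"ussmann non-degenerate.

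The main point to check carefully is that the substitution $H_\om\leadsto\tH_\om$ does not spoil the fine estimates of the preceding instability proof; as the additional terms depend only on the actions, their effect is confined to the $O(1/k_n^3)$ shift of the resonant value $I_{4,n}$ and a small perturbation of the unperturbed frequencies, both comfortably accommodated by the spacing of $(k_n,l_n)$ enforced by the modified condition $(\mathcal NR)$.
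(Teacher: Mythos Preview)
Your proposal is correct and follows the paper's approach. The paper's own proof of the instability is literally one line---replace $\om_2$ by $\om_2+{\bf J}^2$ (with ${\bf J}=I_{4,n}$) throughout the proof of Theorem~\ref{theo.4}---which amounts precisely to your adapted resonance condition $k_n(\om_1+I_{4,n})+l_n(\om_2+I_{4,n}^2)=0$; the non-degeneracy of $\nabla\tH_\om$ is stated by the paper without argument, so your explicit verification via the degree-separation observation (entries of $\nabla\tH_\om$ have degree $\le 3$ in $I$ while the tail of $\nabla B_\infty$ starts at degree $\ge 5$) is a useful addition rather than a deviation.
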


\bigskip

\bluegray\subsection{The case of $\R^d \times \T^d$} \label{sec.torus}\color{black}
We see in this section how the constructions of Theorems \ref{theo.3} to \ref{theo.44} can be carried to the case of Hamiltonians on $\R^d \times \T^d$, $\T^d=\R^d/\Z^d$. We will only discuss the case of Theorem \ref{theo.4}, the others being similar.  Closely related to Hamiltonians as in \eqref{star} are the Hamiltonians on $\R^d \times \T^d$, $d\geq 4$, expressed in action-angle variables by \begin{align} \label{starr} \tag{$**$} H(r,\theta)&=H_\om(r)+{\mathcal O}_\theta^2(r),\\  \nonumber H_\om(r)&=(\omega_1+r_4)r_1+\sum_{j=2}^4 \om_j r_j, \end{align}
where ${\mathcal O}^2_\theta(r)$ denotes a real analytic Hamiltonian on $\R^d \times \T^d$ that is of order $2$ in the $r$ coordinates. For these Hamiltonians, the torus $\{0\}\times \T^d$ is invariant by the flow of $X_H=(\partial_\theta H,-\partial_r H)$, and the restricted dynamics  on this torus is a translation flow of frequency $\om$.

With  $\omega$ non-resonant, we again take  $(k_n,l_n) \in \Z^2$ such that  \begin{equation*} |k_n \om_1+ l_n\om_2| <\frac{1}{k_n^2}. \end{equation*}
Since we do not assume that $\om_1 \om_2<0$ it is possible that $k_nl_n<0$. Similarly to $F_n$ of \eqref{def.Fn}, we introduce 
$$\bar{F}_n(r_1,r_2,\theta_1,\theta_2)=r_1^{|k_n|}r_2^{|l_n|}\cos(2\pi(k_n \theta_1+l_n \theta_2))$$
and the real entire Hamiltonians 
\begin{align*} H(r,\theta)&=H_\om(r)+\sum_{n\in \N} e^{-n(k_n+l_n)} r_3 \bar{F}_n(r_1,r_2,\theta_1,\theta_2)\end{align*}
that satisfy \eqref{starr} and for which one can check similar results as those proved in Theorems \ref{theo.3} and \ref{theo.4}. 

Observe that in this action-angle setting, the fact that $k_n$ or $l_n$ may be negative does not constitute any restriction to the construction, and this is the reason why the condition $\om_1\om_2<0$ is not needed.

\bluegray
\section{Lyapunov unstable resonant equilibria on $\R^4$}  
\black

In case $\omega$ is resonant, it is known that instabilities are more likely to happen. Algebraic examples were known since long time ago \cite{LC,Ch} (see  \cite[\S 31]{MS}). Our construction is actually based on the existence in two degrees of freedom, for 
resonant frequencies, of polynomial  Hamiltonians that have  invariant lines that go through the origin such that any point on such a line converges to the origin for negative times and goes to infinity in finite time in the future.
 
Recall indeed the definition of the following real Hamiltonians for $k,l \in \N^* \times \N^*, k+l>2$
\begin{equation*}F_{k,l}(x_1,x_2,y_1,y_2)=\xi_1^{k}\xi_2^{l} +\eta_1^{k}\eta_2^{l}.\end{equation*}
We have 
\begin{prop}  -- \  \label{prop.diffuse} For any $n \in \N^*$, there exist $t_n \in [0,(2n)^{k+l-2}]$ such that $\Phi_{F_{k,l}}^{t_n}(B_{\frac{1}{2n}})\bigcap {B}_{2n}^c \neq \emptyset$. 
\end{prop}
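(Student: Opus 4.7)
The plan is to exhibit an explicit one-dimensional invariant submanifold of the flow of $F_{k,l}$ through the origin, on which the dynamics reduces to a scalar ODE with finite-time blow-up; rescaling then produces orbits starting in $B_{1/(2n)}$ and escaping $B_{2n}$ in polynomial time. We first pass to the complex symplectic coordinates and to action-angle variables $(I_j,\phi_j)$ via $\xi_j=\sqrt{I_j}\,e^{i\phi_j}$, so that
\[
F_{k,l}=2\,I_1^{k/2}I_2^{l/2}\cos\psi,\qquad \psi:=k\phi_1+l\phi_2,
\]
and the Hamiltonian equations give $\dot I_j$ proportional to $I_1^{k/2}I_2^{l/2}\sin\psi$ (with coefficient $k$ or $l$) and $\dot\psi$ proportional to $\cos\psi$.

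The key observation is that $lI_1-kI_2$ is a first integral, and that on the set $\{\cos\psi=0\}$ both $\dot\psi$ and the angles $\dot\phi_1,\dot\phi_2$ vanish, so this set is invariant. Selecting the sign of $\sin\psi$ that makes the actions grow, the set
\[
\Lambda=\{\psi=\pi/2,\ lI_1=kI_2\}
\]
is therefore invariant, and in the original coordinates each connected component of $\Lambda$ is a ray through the origin. Parametrizing $\Lambda$ by $u\geq 0$ via $I_1=ku$, $I_2=lu$, the restricted one-dimensional dynamics becomes the Bernoulli-type ODE
\[
\dot u = 2\,k^{k/2}l^{l/2}\,u^{(k+l)/2},
\]
whose exponent $m:=(k+l)/2$ exceeds $1$ precisely because $k+l>2$. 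Explicit integration gives
\[
u(t)=\bigl(u_0^{1-m}-(k+l-2)k^{k/2}l^{l/2}\,t\bigr)^{1/(1-m)},
\]
which blows up at the finite time $t^{*}=u_0^{1-m}/[(k+l-2)k^{k/2}l^{l/2}]$.

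To conclude, note that on $\Lambda$ one has $|z|^2=2(I_1+I_2)=2(k+l)u$, so choosing $z_0\in\Lambda$ with $|z_0|$ just below $1/(2n)$ forces $u_0$ just below $1/(8n^2(k+l))$; the time $t_n$ to reach $u=2n^2/(k+l)$ (equivalently $|z|=2n$) is then at most $t^{*}$, which is of order $n^{k+l-2}$. Equivalently, the homogeneity $F_{k,l}(\lambda z)=\lambda^{k+l}F_{k,l}(z)$ yields the scaling symmetry $z(t)\mapsto \lambda z(\lambda^{k+l-2}t)$ of solutions, so a single blow-up orbit on $\Lambda$ at unit scale, rescaled by $\lambda=1/(2n)$, produces the required escape orbit in time $(2n)^{k+l-2}$. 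The genuine content is the identification of the invariant ray $\Lambda$ together with the observation that $(k+l)/2>1$ forces blow-up; the rest is the elementary integration of a Bernoulli equation, and the only point requiring care is matching the explicit constant in $(2n)^{k+l-2}$, which is handled by the optimal choice of $u_0$ on $\Lambda$ combined with the scaling.
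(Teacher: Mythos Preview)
Your proof is correct and follows essentially the same route as the paper's: both identify the same invariant ray $\{\,lI_1=kI_2,\ k\phi_1+l\phi_2=\pi/2\,\}$ through the origin and reduce the flow on it to a scalar ODE with superlinear exponent, hence finite-time blow-up. The only difference is the choice of parameter --- the paper writes $(\xi_1,\xi_2)=(re^{i2\pi\nu},\sqrt{l/k}\,re^{i2\pi\nu'})$ and obtains $\dot r = k(l/k)^{l/2} r^{\,k+l-1}$, while you set $I_1=ku,\ I_2=lu$ and obtain the equivalent $\dot u = 2k^{k/2}l^{l/2}u^{(k+l)/2}$ (related by $u\propto r^2$); your homogeneity/scaling remark is a pleasant addition not made explicit in the paper.
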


If $\om_1$ and $\om_2$ are such that $k\om_1+l\om_2=0$, then the Hamiltonian flow of  $\xi_1^{k}\xi_2^{l} +\eta_1^{k}\eta_2^{l}$ commutes with that of 
$\om_1 I_1+\om_2 I_2$. Hence we get the following consequence:

 \begin{coro}  -- \ \label{rem.resonant} If $\om_1$ and $\om_2$ are such that $k\om_1+l\om_2=0$ for some $k,l\geq 1$ and $k+l>2$, then for any $a \in \R^*$, the flow of $H(x_1,x_2,y_1,y_2)=\om_1 I_1+\om_2 I_2 + a(\xi_1^{k}\xi_2^{l} +\eta_1^{k}\eta_2^{l})$ has an elliptic fixed point with frequency $(\om_1,\om_2)$ that is Lyapunov unstable. 
\end{coro}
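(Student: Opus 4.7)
The plan is to reduce the instability of $\Phi^t_H$ to the diffusion provided by Proposition \ref{prop.diffuse} for the pure resonant Hamiltonian $F_{k,l}$, using the fact that $H=H_\om+aF_{k,l}$ is a commuting sum. First, since $aF_{k,l}$ vanishes to order $k+l>2$ at the origin, the quadratic part of $H$ is $\om_1I_1+\om_2I_2$, and the origin is an elliptic fixed point with frequency $(\om_1,\om_2)$. Next, the resonance $k\om_1+l\om_2=0$ forces $\{H_\om,F_{k,l}\}=0$: in the complex coordinates of Section \ref{sec.BNF}, $\Phi^t_{H_\om}$ acts diagonally by $(\xi_j,\eta_j)\mapsto(e^{-i\om_jt}\xi_j,e^{i\om_jt}\eta_j)$, so it multiplies $\xi_1^k\xi_2^l$ by $e^{-i(k\om_1+l\om_2)t}=1$ and $\eta_1^k\eta_2^l$ by the conjugate factor, leaving $F_{k,l}$ invariant. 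Consequently the two flows commute and factor as $\Phi^t_H=\Phi^t_{H_\om}\circ\Phi^{at}_{F_{k,l}}$, and $\Phi^t_{H_\om}$, being a product of planar rotations, preserves the Euclidean norm on $\R^4$.

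To conclude, fix $n\geq 1$ and let $z_n\in B_{1/(2n)}$, $t_n\in[0,(2n)^{k+l-2}]$ be as in Proposition \ref{prop.diffuse}, so that $|\Phi^{t_n}_{F_{k,l}}(z_n)|>2n$. If $a>0$, set $s_n=t_n/a$; the factorization and norm invariance give
\[
|\Phi^{s_n}_H(z_n)|=|\Phi^{t_n}_{F_{k,l}}(z_n)|>2n,
\]
which already shows instability. If $a<0$ we need diffusion in negative time, obtained from the anti-symplectic involution $R:(x,y)\mapsto(x,-y)$: in complex coordinates $R$ swaps $\xi\leftrightarrow\eta$ and so preserves both the Euclidean norm and $F_{k,l}$, while reversing the symplectic form; hence $R\circ\Phi^t_{F_{k,l}}=\Phi^{-t}_{F_{k,l}}\circ R$. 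Taking $s_n=t_n/|a|$ and starting from $Rz_n\in B_{1/(2n)}$, we get
\[
|\Phi^{s_n}_H(Rz_n)|=|\Phi^{-t_n}_{F_{k,l}}(Rz_n)|=|R\,\Phi^{t_n}_{F_{k,l}}(z_n)|=|\Phi^{t_n}_{F_{k,l}}(z_n)|>2n.
\]
In either case the origin is Lyapunov unstable.

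Once Proposition \ref{prop.diffuse} is in hand, the argument is essentially tautological via commutativity; the only non-automatic step, and the one I expect to be the mild obstacle, is the case $a<0$, handled cleanly by the natural time-reversing symmetry $R$ of $F_{k,l}$.
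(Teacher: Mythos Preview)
Your argument is correct and matches the paper's approach: the paper deduces the corollary directly from the commutation $\{H_\om,F_{k,l}\}=0$ (stated just before the corollary) together with the norm-preservation of $\Phi^t_{H_\om}$, exactly as you do. Your treatment of the case $a<0$ via the time-reversing involution $R$ is a clean addition that the paper does not spell out, since its later applications only require $a>0$.
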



\noindent {\bg \sc Proof of Proposition \ref{prop.diffuse}} 

 We let $u=\sqrt{l/k}$, $\a=k+l-1$. We assume $\a\geq 2$.  WLOG, we suppose that $u\geq 1$. 

Pick and fix $\nu, \nu' \in (0,1)$ such that 
\begin{equation*}-\frac{1}{4} +k\nu +l\nu'=1.\end{equation*}
Define a subset of $\R^4$,
\begin{equation*}\Delta := \left\{(x_1,x_2,y_1,y_2) \in \R^4 : (\xi_1,\xi_2)=\left(r e^{i2\pi \nu},u r e^{i2\pi \nu'}\right), r \in \R  \right\}.\end{equation*}
The Hamiltonian equations of $H$ give 
\begin{equation*}   \dot{\xi}_1 = -i k \eta_1^{k-1} \eta_2^l, \quad  \dot{\xi}_2 = -i l \eta_1^{k} \eta_2^{l-1}\end{equation*} 

For $(x_1,x_2,y_1,y_2) \in \Delta$, we get 
\begin{equation*}    \dot{\xi}_1 =  ku^l r^\a e^{i2\pi \nu}, \quad  \dot{\xi}_2 = ku^{l+1}r^\a e^{i2\pi \nu'} \end{equation*}
which shows that $\Delta$ is invariant by the flow $\Phi_{F_n}^{t}$, and  moreover, that  the restriction of the vector field on $\Delta$ is given by 

\begin{equation*}\dot{r}=k u^{l}r^{\a}.\end{equation*}
Hence if we start with $r_0=\frac{1}{2n}$ we see that 
\begin{equation*}r(t)^{\a-1}=\frac{1}{(2n)^{\a-1}-(\a-1)ku^l t}.\end{equation*}
Define then $t_n$ such that $r(t_n)=2n+1$. Note that  $0\leq t_n \leq T_n:=(2n)^{\a-1}/(ku^l(\a-1))<(2n)^{\a-1}$ since $T_n$ is an explosion time of $r(t)$ with the initial condition $r_0=\frac{1}{2n}$. \bg 
$\Box$ 
\black

\bluegray
\section{Proofs of Theorems \ref{theo.3}--\ref{theo.44}} \label{sec555} \black 
\black 
\bluegray

\subsection{Description of the diffusion mechanism} \label{sec.description}
 \color{black}
 
We first describe the proof of Theorem \ref{theo.3}, that is, diffusion in $3$ degrees of freedom near a close to resonant elliptic equilibrium.

{We want to exhibit diffusive orbits for the flow of \newline \indent $H_\om(x,y)+\sum_{n\in \N} e^{-n(k_n+l_n)} I_3 F_n(x_1,x_2,y_1,y_2)$ 
\begin{itemize}
\item From Corollary \ref{rem.resonant}, we know that if $k_n\bar \om_1+l_n\bar \om_2=0$ then the flow of $\bar \om_1 I_1+\bar \om_2 I_2+F_n(x_1,x_2,y_1,y_2)$ is unstable
\item Due to $(\mathcal L)$, an approximation argument (section \ref{sec.gronwall}) will show that, for fixed $I_3={\bf I}:=e^{-e^{n^3(k_{n}+l_{n})}}$, the flow of $H_\om(x,y)+ e^{-n(k_n+l_n)} {\bf I} F_n(x_1,x_2,y_1,y_2)$, has a point satisfying $I_1, I_2 \sim 1/n, I_3 ={\bf I}$, that escapes after a time much smaller that $ {\bf I}^{-1.1}$. 
\item The terms $F_l, l>n$ are too small and do not disrupt the diffusion at this time scale 
 \item The terms $F_l, l<n$ average out to an $I_3^2$ term that contributes with $\mathcal O({\bf I}^2)$ magnitude at this level of $I_3$ and do not disrupt the diffusion at this time scale that is much smaller $ {\bf I}^{-1.1}$.\end{itemize}

In four degrees of freedom, we replace the almost resonance condition on $\om$ by the use of the fourth action variable that is also invariant along the flow. Indeed, when we fix the value of this variable to $I_{4,n}$ such that $k_n (\om_1+I_{4,n})+l_n \om_2=0$, we find our Hamiltonian exactly in the form to which we can apply the Corollary \ref{rem.resonant}. The variable $I_3$ plays then the same role as in the precedent case, of isolating the effect of a single $F_n$ in the diffusion, for various values of $I_3 \to 0$. 

With a bit more scrutiny in the terms of order $I_3^2$ of the Birkhoff normal form, we can easily see that these forms diverge in  our examples.

\bluegray
\subsection{Approximation by resonant systems and diffusive orbits} \label{sec.gronwall}
\black

\begin{lemm} -- \  \label{gronwall} There exists a constant $C_d>0$ such that the following holds. 
 Suppose $F,G \in C^2(\R^{2d},\R)$, $\om \in \R^d$,  $A,r,R, a, T>0$  such that $r\leq R$, $a^2 A T e^{d a A T} \leq 1/4$, and 
\begin{itemize} 

\medskip 

\item $\displaystyle{H(x,y)=\sum_{j=1}^d \om_j I_j +a F(x,y)}$

\medskip 

\item $\displaystyle{\bbH(x,y)=\sum_{j=1}^d \om_j I_j +a F(x,y)+a^2 G(x,y) }$

\medskip 

\item $\displaystyle{\|F\|_{C^2(B_{R+1})} \leq A, \quad \|G\|_{C^1(B_{R+1})} \leq A }$

\medskip 

\item $\displaystyle{\text{For \ all } s \in [0,T] : \Phi_H^{s}(B_r) \subset B_R }$
\end{itemize} 
Then, for all  $s \in [0,T]$ and for all $z \in B_r$ :
\begin{equation} \label{eqgro} \left| \Phi_H^{s}(z)-\Phi_{\bbH}^{s}(z)\right| \leq C_d a^2 A T e^{C_d a A T}\end{equation}
\end{lemm}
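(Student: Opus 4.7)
The obstacle to a naive Gronwall estimate is that $\|H\|_{C^2}$ contains the possibly large linear term $H_\om$, whereas the bound in \eqref{eqgro} is required to be uniform in $\om$. My plan is to pass to the rotating frame first, and only then apply Gronwall. Since $H_\om=\sum \om_j(x_j^2+y_j^2)/2$, the flow $\Psi^s:=\Phi_{H_\om}^s$ acts as rotation by angle $-\om_j s$ in each $(x_j,y_j)$-plane, hence is a linear symplectic isometry of $\R^{2d}$ preserving every ball $B_\rho$.

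Define the interaction-picture orbits $w(s):=\Psi^{-s}\bigl(\Phi_H^s(z)\bigr)$ and $\widetilde w(s):=\Psi^{-s}\bigl(\Phi_\bbH^s(z)\bigr)$. Using that $\Psi^s$ commutes with $X_{H_\om}$ and is symplectic, a direct computation gives
\begin{equation*}
\dot w = a\,X_{F_s}(w), \qquad \dot{\widetilde w} = a\,X_{F_s}(\widetilde w) + a^2 X_{G_s}(\widetilde w),
\end{equation*}
where $F_s:=F\circ\Psi^s$ and $G_s:=G\circ\Psi^s$. Since $\Psi^s$ is an isometry preserving $B_{R+1}$, we have $\|F_s\|_{C^2(B_{R+1})}\leq A$ and $\|G_s\|_{C^1(B_{R+1})}\leq A$. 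Setting $\phi(s):=|w(s)-\widetilde w(s)|$, which equals $|\Phi_H^s(z)-\Phi_\bbH^s(z)|$ since $\Psi^{-s}$ is an isometry, and assuming momentarily that $\widetilde w(s)\in B_{R+1}$, we get
\begin{equation*}
\dot\phi(s) \leq C_d a A\,\phi(s) + C_d a^2 A.
\end{equation*}
Integrating with $\phi(0)=0$ yields $\phi(s)\leq a\bigl(e^{C_d a A s}-1\bigr)\leq C_d a^2 A s\, e^{C_d aAs}$ via $e^x-1\leq xe^x$, which is precisely \eqref{eqgro}.

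The remaining point is a standard bootstrap ensuring $\widetilde w(s)\in B_{R+1}$. We know $w(s)\in B_R$, so it suffices to keep $\phi(s)\leq 1$. Let $s^{*}:=\sup\{s\in[0,T]:\phi(t)\leq 1\text{ for all }t\leq s\}$. On $[0,s^{*}]$ the estimate above is valid and gives $\phi(s^{*})\leq C_d a^2 AT\,e^{C_d a A T}\leq 1/4$ by the smallness hypothesis $a^2 AT e^{d a A T}\leq 1/4$ (after adjusting $C_d$). If $s^{*}<T$, continuity forces $\phi(s^{*})=1$, a contradiction, so $s^{*}=T$ and the bound holds on all of $[0,T]$. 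The only nontrivial ingredient is the interaction-picture reduction that eliminates the $\om$-dependence; the rest is routine Gronwall plus a continuity argument.
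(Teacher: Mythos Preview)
Your proof is correct and essentially identical to the paper's. The paper applies the rotation $e^{sU_j}$ (which is exactly your $\Psi^{-s}$ restricted to the $j$-th plane) directly to the difference $\Phi_\bbH^s(z)-\Phi_H^s(z)$, then computes the derivative of this rotated difference and applies Gronwall with the same bootstrap argument; you instead rotate each orbit separately and subtract, which amounts to the same computation since $\Psi^{-s}$ is linear.
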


\bg 
\begin{proof} 
\black
Let $(X(s),Y(s)):=\Phi_H^{s}(z)$ and $(x(s),y(s)):=\Phi_{\bbH}^{s}(z)$.  
 Define the matrices $U_j=
\left(
\begin{matrix}
0 & -\omega_j  \\
\omega_j & 0
\end{matrix}
\right),$ and introduce the variables 
$$\left(
\begin{matrix}
u_j(s)  \\
v_j(s)
\end{matrix}
\right)=e^{sU_j}\left(
\begin{matrix}
x_j(s)-X_j(s)  \\
y_j(s)-Y_j(s)
\end{matrix}
\right).$$
Let $\xi(s)=(u_1(s),v_1(s),\ldots,u_d(s),v_d(s))$. Since $e^{sU_j}$ is a Euclidean isometry matrix,  \eqref{eqgro} is equivalent to proving
\begin{equation*} |\xi(s)| \leq  a^2 A T e^{d a A T}\end{equation*}

The Hamiltonian equations give that 
$$\left(
\begin{matrix}
\dot{u}_j(s)  \\
\dot{v}_j(s)
\end{matrix}
\right)=e^{sU_j}\left(
\begin{matrix}
a d_{x_j}F(\Phi_{\bbH}^{s}(z))-ad_{x_j}F(\Phi_{H}^{s}(z))+a^2d_{x_j}G(\Phi_{\bbH}^{s}(z))\\
a d_{y_j}F(\Phi_{\bbH}^{s}(z))-ad_{y_j}F(\Phi_{H}^{s}(z))+a^2d_{y_j}G(\Phi_{\bbH}^{s}(z))
\end{matrix}
\right).$$
Since, as long as $\Phi_H^{s}(z)$, and $\Phi_{\bbH}^{s}(z)$ are in $B_{R+1}$, we have that 
\begin{align*} |d_{x_j}F(\Phi_{\bbH}^{s}(z))-d_{x_j}F(\Phi_{H}^{s}(z))|&\leq \|F\|_{C^2(B_{R+1})} \sum \left(|x_j(s)-X_j(s)|+|y_j(s)-Y_j(s)|\right)\\
&\leq \sqrt{2d}  \|F\|_{C^2(B_{R+1})}  |\xi(s)|, \end{align*}
and a similar bound for the $y_j$ derivatives, the bounds on $F$ and $G$ then yield
\begin{equation*} |\dot{\xi}(s)|\leq 2d aA|\xi(s)|+\sqrt{2d} a^2A, \quad \xi(0)=0. \end{equation*}
Gronwall's inequality then implies that for some constant $C_d>0$, and as long as $\Phi_H^{s}(z)$, and $\Phi_{\bbH}^{s}(z)$ are in $B_{R+1}$ we have 
\begin{equation*} |\xi(s)|\leq C_d a^2Ase^{C_d aAs}. \end{equation*} 
Finally the condition $C_da^2 A T e^{C_d a A T} \leq 1/4$ allows to conclude, since it also  makes sure that  $\Phi_{\bbH}^{s}(B_r) \subset B_{R+1}$ for $s\in [0,T]$, from the fact that $\Phi_H^{s}(B_r) \subset B_R$.  \bg \end{proof} \black 
\bigskip 
\indent \quad \begin{coro} \black-- \  \label{cor.diff}
Let $a \in (e^{-2e^{n^3(k_n+l_n)}},e^{-e^{n^3(k_n+l_n)}})$. 
Let $H \in C^2(\R^{2d},\R)$ be such that 
\begin{equation*}H(x,y)= H_\om(x,y)+a F_n(x_1,x_2,y_1,y_2)+a^2 G_n(x,y)\end{equation*}
 with $\|G_n\|_{C^1(B_{2n})} \leq e^{4n(k_n+l_n)}$. 
 
 If \eqref{liouville} holds, there exist $t_n  \in [0,{(2n)^{k_n+l_n-2}}]$ and $z_n \in  \R^{2d}$ such that $|z_n|=\frac{1}{2n}$ and  $|\Phi_{H}^{\frac{t_n}{a}}(z_n)|\geq n$.
\end{coro}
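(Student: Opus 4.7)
\medskip \noindent \textbf{Proof proposal.} The plan is to approximate $H$ by an \emph{exactly resonant} auxiliary Hamiltonian $H^*$ whose quadratic part Poisson-commutes with $F_n$; Proposition \ref{prop.diffuse} then produces an explicit diffusing orbit for $H^*$, which we transfer to $H$ via Lemma \ref{gronwall}. The double-exponential Liouville condition \eqref{liouville} and the window for $a$ are tailored precisely so that all the required estimates fit.

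Set $\epsilon := k_n \om_1 + l_n \om_2$, so that $|\epsilon| < e^{-e^{n^4(k_n+l_n)}}$ by \eqref{liouville}, and let $\om^* := (\om_1 - \epsilon/k_n, \om_2, \om_3, \ldots, \om_d)$, which satisfies $k_n \om_1^* + l_n \om_2 = 0$. Define $H^* := H_{\om^*} + a F_n$. Since $H_{\om^*}$ Poisson-commutes with $F_n$ (as in the derivation of Corollary \ref{rem.resonant}), we get the factorization $\Phi_{H^*}^{s} = \Phi_{H_{\om^*}}^{s} \circ \Phi_{aF_n}^{s}$, in which the first factor is a Euclidean isometry. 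Embedding the half-line $\Delta \subset \R^4$ of Proposition \ref{prop.diffuse} into $\R^{2d}$ by zero-padding, and using the time rescaling $\Phi_{aF_n}^{s} = \Phi_{F_n}^{as}$, I obtain $z_n \in B_{1/(2n)}$ (on $\Delta$) and $t_n \in [0,(2n)^{k_n+l_n-2}]$ with $|\Phi_{H^*}^{t_n/a}(z_n)| \geq 2n$; moreover the monotone radial growth on $\Delta$ ($\dot r = k_n u^{l_n} r^{k_n+l_n-1}$) yields $|\Phi_{H^*}^{s}(z_n)| \leq 2n+1$ for all $s \in [0, t_n/a]$.

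To transfer the diffusion to $H$, I would write $H = H^* + a^2 \widetilde{G}$ with $\widetilde{G} := G_n + (\epsilon/(k_n a^2))\, I_1$. The lower bound $a > e^{-2e^{n^3(k_n+l_n)}}$ and the Liouville bound on $\epsilon$ give $|\epsilon/(k_n a^2)| \leq e^{-e^{n^4(k_n+l_n)}/2}$, so $\|\widetilde{G}\|_{C^1(B_{2n+2})} \leq 2 e^{4n(k_n+l_n)}$; a direct estimate on the polynomial $F_n$ of degree $k_n+l_n$ gives the same bound for $\|F_n\|_{C^2(B_{2n+2})}$ when $n$ is large. I then apply Lemma \ref{gronwall} (along the orbit of $z_n$) with $A := 2e^{4n(k_n+l_n)}$, $R := 2n+1$, $T := t_n/a$: since $aT = t_n \leq (2n)^{k_n+l_n-2}$ we get $aAT \leq e^{5n(k_n+l_n)}$, while $a < e^{-e^{n^3(k_n+l_n)}}$ forces $a^2 AT \leq e^{-e^{n^3(k_n+l_n)}/2}$. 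Because $n^3 \gg 5n$ for $n$ large, these two estimates combine to give $C_d a^2 A T e^{C_d a A T} \leq 1/4$, so Lemma \ref{gronwall} yields $|\Phi_{H}^{t_n/a}(z_n) - \Phi_{H^*}^{t_n/a}(z_n)| \leq 1$, and hence $|\Phi_{H}^{t_n/a}(z_n)| \geq 2n - 1 \geq n$.

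The main obstacle is precisely the quantitative matching of scales: the exponent $n^4$ in \eqref{liouville} must dominate the $n^3$ in the window for $a$ so that the exactly resonant correction $\epsilon I_1/k_n$ is absorbed into the $a^2$ error, and the upper and lower bounds on $a$ balance two opposing requirements: $a$ must be small enough that $a^2 AT$ is negligible against the Gronwall factor $e^{-C_d aAT}$, but not so small that the diffusion time $t_n/a$ makes $e^{C_d aAT}$ explode. A minor technical point is that Lemma \ref{gronwall} as stated asks $\Phi_{H^*}^{s}(B_r) \subset B_R$ for \emph{every} point in $B_r$, whereas the flow of $F_n$ off $\Delta$ may escape at other rates; however the Gronwall argument in the proof of Lemma \ref{gronwall} only uses that the two orbits being compared stay in $B_{R+1}$, which here holds along the single orbit of $z_n$ by construction.
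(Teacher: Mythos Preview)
Your proof is correct and follows essentially the same route as the paper's: replace $\om$ by an exactly resonant $\om^*$ so that $H_{\om^*}$ commutes with $F_n$, use Proposition~\ref{prop.diffuse} together with the isometry $\Phi_{H_{\om^*}}^s$ to get an explicit diffusing orbit for $H^*=H_{\om^*}+aF_n$, absorb $(\om-\om^*)I_1$ into the $a^2$ error via the Liouville bound, and close with Lemma~\ref{gronwall}. The only differences are cosmetic choices of constants (you take $R=2n+1$ and target radius $2n$, the paper takes $R=n+1$); your observation that the Gronwall argument only needs confinement along the single orbit of $z_n$, rather than for all of $B_r$, is a valid reading of the proof of Lemma~\ref{gronwall} and arguably makes this step cleaner than in the paper.
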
 

\begin{proof} \black From \eqref{liouville}, there exists $\om'_1$ such that $|\om_1'-\om_1|< e^{-e^{n^4	(k_n+l_n)}}$ and $|k_n \om'_1+l_n\om_2|=0$. Then, $\{\om'_1 \xi_1\eta_1+ \om_2 \xi_2 \eta_2, F_n\}=0$. Hence if we define $\om'=(\om_1',\om_2,\ldots,\om_d)$ and 
\begin{equation*}H'(x,y)= H_{\om'}(x,y)+a F_n(x_1,x_2,y_1,y_2),\end{equation*} we get that 
\begin{equation*}|\Phi_{H'}^{\frac{t}{a}}(z)|=\left|\Phi_{\om'_1 I_1+ \om_2 I_2}^{\frac{t}{a}}\left(\Phi_{aF_n}^{\frac{t}{a}}(z)\right)\right|=|\Phi_{aF_n}^{\frac{t}{a}}(z)|=|\Phi_{F_n}^{{t}}(z)|.\end{equation*}
 Hence, by Proposition \ref{prop.diffuse}, there exists  $t_n \in[0,{(2n)^{k_n+l_n-2}}]$ and $z_n \in \R^{2d}$ such that $|z_n|\leq \frac{1}{2n}$, $|\Phi_{H'}^{\frac{t_n}{a}}(z_n)|=n+1$ and $\Phi_{H'}^{\frac{s}{a}}(B_{\frac{1}{n}})\subset B_{n+1}$ for every $s\leq t_n$.

Now since $|\om_1'-\om_1|< e^{-e^{n^4	(k_n+l_n)}}\leq a^2$, we have that 
\begin{equation*}H(x,y)= H_{\om'}(x,y)+aF_n(x_1,x_2,y_1,y_2)+a^2 G'_n(x,y)\end{equation*}
with  $\|G'_n\|_{C^1(B_{2n})} \leq e^{4n(k_n+l_n)}+1$. Note also that $\|F_n\|_{C^2(B_{2n})}\leq e^{n(k_n+l_n)}$.

Let $A=e^{4n(k_n+l_n)}+1$. Observe that for $T=\frac{t_n}{a}$, and $C_d$ as in Lemma \ref{gronwall}, we have  that
 $C_da^2 A T e^{C_d a A T} =C_da A t_n e^{C_d A t_n} \leq \frac1 4$.  We can thus apply Lemma \ref{gronwall}, with $r=\frac{1}{2n}$, $R=n+1$,  and deduce  that for all  $s \in [0,t_n]$ and for all $z \in B_{\frac{1}{2n}}$ :
 
\begin{equation*} \left| \Phi_H^{\frac{s}{a}}(z)-\Phi_{H'}^{\frac{s}{a}}(z) \right|\leq  a A t_n e^{d A t_n}\leq \frac1 4\end{equation*}
and the conclusion of the corollary thus holds if we apply the latter inequality to $z=z_n$ and $s=t_n$. \bg
\bg \end{proof} \black  
\black

\bluegray
\subsection{\sc Proof of Theorem \ref{theo.3}} \label{sec.theo3}
\black

We fix $n \in \N$ large, and want to show that there exists $z_n \in \R^6$, such that $|z_n|\leq \frac{1}{n}$, and $\tau_n \geq 0$ such that $|\Phi^{\tau_n}_H(z_n)|\geq n$. 

Note that for any value ${\bf I}\in \R_+$, the set $\{(x,y)\in \R^6 :  I_3=\xi_3\eta_3={\bf I}\}$ is invariant under all the flows we consider in this construction.

We restrict from here on our attention to 

\begin{equation}\label{eq.I3} I_3 = {\bf I}:=e^{-e^{n^3(k_{n}+l_{n})}}.\end{equation} 
For $r>0$, we  denote 
$$\hat{B}(r):=\{(x_1,x_2,y_1,y_2,x_3,y_3) : (x_1,x_2,y_1,y_2)\in B(r), I_3= {\bf I}\}.$$
In all this section, the norms $\| \cdot \|_{C^k(\hat{B}_{r})}$ will refer to the $C^k$ norm with respect to the variables $1$ and $2$ and not $3$. 

Introduce 
\begin{equation} \label{eqaj} a_j=e^{-{j}(k_{j}+l_{j})}, \quad  b_{j}=\frac{a_j}{k_{j} \om_1 +l_{j} \om_2}. \end{equation} 
Since $k_{n}\geq  e^{\frac{1}{|k_{j} \om_1+l_{j}\om_2|}}$ for any $j\leq n-1$, we have for  sufficiently large $n$
\begin{equation} \label{eqb} b_{j}\leq a_j\ln k_n.\end{equation} 

 Define the Hamiltonians on $\R^6$
\begin{equation*}\chi_{j}=-i b_{j} I_3  E_j, \quad  E_j= \xi_1^{k_{j}} \xi_2^{l_{j}}-\eta_1^{k_{j}} \eta_2^{l_{j}}.\end{equation*} 
that satisfy
\begin{equation} \label{echi} \{H_\om,\chi_{j}\}=-a_j I_3 F_{j}.\end{equation}

Let $\widehat \chi_{n-1}=\sum_{j\leq n-1} \chi_j$. 
It follows from \eqref{eqb} that for any fixed $k \in \N$, for sufficiently large $n$ 
\begin{equation} \label{echi_bound} \|\widehat \chi_{n-1}\|_{C^k(\hat{B}_{2n})}\leq  {\bf I} n \ln k_n\leq {\bf I}^{0.9}, \end{equation}
from the definition of ${\bf I}$ in \eqref{eq.I3}. We thus get for  $z\in \hat{B}(2n)$ 
 \begin{equation} \label{eq.chi}\left| \Phi^1_{\widehat \chi_{n-1}}(z) - z \right| \leq {\bf I}^{0.8}.\end{equation} 
Next, we conjugate the flow of $H$ with the time one map of $\widehat \chi_{n-1}$. 
\footnote{In the computations that will follow, it is helpful to keep in mind that $a_jF_j$ and $a_j E_j$ are bounded in analytic norm, and that even if $b_j$ is large, it remains negligible compared to ${\bf I}^{-0.1}$.}

We have  \begin{equation*} \label{eqh} H \circ \Phi^1_{\widehat \chi_{n-1}} = H+\{H,\widehat \chi_{n-1}\}+\frac{1}{2!}\{\{H,\widehat \chi_{n-1}\},\widehat \chi_{n-1}\}+\ldots  \end{equation*} 
 Due to \eqref{echi} we have 
$$\{H,\widehat \chi_{n-1}\}=  -\sum_{j \leq n-1}  \III a_j F_j-  \III^2 \{ \sum_{j \geq 1} a_j  F_j,  \sum_{j \leq n-1}  i b_{j}  E_j\}$$
Hence 
\begin{align} \label{eqh2} H \circ \Phi^1_{\widehat \chi_{n-1}}&= H_\om+ \III a_n F_n+ \sum_{j \geq n+1}  \III a_j F_j+B_n  \III^2 + \mathcal R \\ \nonumber B_n&=-\frac{1}{2}  \{ \sum_{j \leq n-1} a_j  F_j,  \sum_{j \leq n-1}  i b_{j}  E_j\}- \{ \sum_{j \geq n} a_j  F_j,  \sum_{j \leq n-1}  i b_{j}  E_j\}\end{align}
where 
$$\mathcal R = \frac{1}{2}\{\{H-H_\om,\widehat \chi_{n-1}\},\widehat \chi_{n-1}\}+\frac{1}{3!}\{\{\{H,\widehat \chi_{n-1}\},\widehat \chi_{n-1}\},\widehat \chi_{n-1}\}+\ldots$$
Recall that $H(x,y)=H_\om(x,y)+\sum_{n\in \N} e^{-n(k_n+l_n)} I_3 F_n(x_1,x_2,y_1,y_2)$, hence due to \eqref{eq.I3} and  \eqref{echi_bound},  we have that 
$\mathcal R$ is a real analytic Hamiltonian that is of order $3$ in $I_3$ and satisfies 
\begin{equation} \label{R_bound}   \|\mathcal R\|_{C^1(\hB(2n))}\leq {\bf I}^{\frac{5}{2}}. \end{equation}
Now, since the $a_jF_j$ and $a_j E_j$ are bounded and converge exponentially fast to $0$ in analytic norm as $j \to \infty$, and by the bound \eqref{eqb} on the $b_j$ we have that  
\begin{equation} \label{two_bound} \|B_n\|_{C^1(\hB(2n))} \leq n \ln k_n.\end{equation} 
Since $k_{n+1} \geq {\bf I}^{-1}$ we have from the definition of $a_j=e^{-{j}(k_{j}+l_{j})}$ that 
\begin{equation} \label{one_bound} \| \sum_{j \geq n+1}  a_j F_j \|_{C^1(\hB(2n))} \leq {\bf I}\end{equation}

 Since $I_3$ is invariant by the Hamiltonian flow of all the functions we are considering, we now fix $I_3={\bf I}$ and consider the flow of $H \circ \Phi^1_{\widehat \chi_{n-1}}$ in restriction to the $(x_1,x_2,y_1,y_2)$ variables. 
Introduce $a:={\bf I}a_n$. We then have from  \eqref{eqh2},   \eqref{R_bound}, \eqref{two_bound} and \eqref{one_bound},  
\begin{equation*}\label{fconj}H \circ \Phi^1_{\widehat \chi_{n-1}} = H_\om+a F_{n}(x_1,x_2,y_1,y_2)+ a^2 G(x,y) \end{equation*}
with $\|G\|_{C^1(\hB_{2n})} \leq a_n^{-2}(1+n \ln k_n+{\bf I}^{1/2}) \leq e^{3n(k_{n}+l_{n})}.$

Observe that from the definitions of ${\bf I}$ and $a_n$ in \eqref{eq.I3} and \eqref{eqaj}, we have that $a={\bf I}a_n \in (e^{-2e^{n^3(k_n+l_n)}},e^{-e^{n^3(k_n+l_n)}})$.  Since \eqref{liouville} holds by hypothesis, we can thus apply Corollary \ref{cor.diff} and get that there exist $t_{n}  <(2n)^{k_{n}+l_{n}-2}$ and $w_{n} \in  \R^4$ such that $|w_{n}|=\frac{1}{2n}$ and  $|\Phi_{H\circ \Phi^1_{\widehat \chi_{n-1}}}^{\frac{t_{n}}{a}}(w_{n})|\geq n$. Now,  for $z_n$ we pick any $(x_3,y_3) \in \R^2$ such that $I_3={\bf I}$ and let $z_n=\Phi^{-1}_{\widehat \chi_{n-1}}(w_n,x_3,y_3)$. Thus, $|\Phi_{H}^{\frac{t_{n}}{a}}(z_{n})|\geq n$, while \eqref{eq.chi} implies that  $|z_n|\leq \frac{1}{n}$. This finishes the proof of Lyapunov instability. 

\bigskip

We now prove the divergence of the BNF of $H$. We first separate in $B_n$ of \eqref{eqh2} the monomials that only depend on the action variables from the other  non-resonant monomials. Recall that $\omega$ is assumed to be non-resonant, and that the non-resonant monomials are thus of the form  $c_{u_1,u_2,v_1,v_2}\xi_1^{u_1}\xi_2^{u_2}\eta_1^{v_1}\eta_2^{v_2}$ with $u_1\neq v_1$ or  $u_2 \neq v_2$. Observe that for $i,j \in \N$ we have that 
\begin{multline}\label{eqFE}
\{F_i,E_j\}=-ik_ik_j(\xi_1^{k_i-1}\xi_2^{l_i}\eta_1^{k_j-1}\eta_2^{l_j}+\eta_1^{k_i-1}\eta_2^{l_j}\xi_1^{k_j-1}\xi_2^{l_i})\\
-il_il_j(\xi_1^{k_i}\xi_2^{l_i-1}\eta_1^{k_j}\eta_2^{l_j-1}+\eta_1^{k_i}\eta_2^{l_i-1}\xi_1^{k_j}\xi_2^{l_j-1}).
 \end{multline}
Hence, the resonant monomials only come from $i=j$ and we have that 

\begin{equation} \label{h41} H \circ \Phi^1_{\widehat \chi_{n-1}} = H_\om-\sum_{j \leq n-1}\frac{a_j^2}{k_{j} \om_1 +l_{j} \om_2} I_3^2 \left(k_j^2 I_1^{k_j-1}I_2^{l_j}+l_j^2 I_1^{k_j}I_2^{l_j-1}\right)+
I+II+III\end{equation}
with
\begin{align*}
I&=I_3 \sum_{j \geq n} a_j  F_j(\xi_1,\xi_2,\eta_1,\eta_2)\\
II&=I_3^2 {\mathcal N} \\
III&={\mathcal R}=I_3^3W(x,y) \end{align*}
where $W$ is a real analytic Hamiltonian, and $\mathcal N$ is an analytic Hamiltonian  that is a sum of non-resonant terms
$${\mathcal N} = \sum_{u_1\neq v_1 \text{or } u_2 \neq v_2} c_{u_1,u_2,v_1,v_2}\xi_1^{u_1}\xi_2^{u_2}\eta_1^{v_1}\eta_2^{v_2}.$$
 
It is easy to see now that $H_\omega-\sum \frac{a_j^2}{k_{j} \om_1 +l_{j} \om_2} I_3^2 \left(k_j^2 I_1^{k_j-1}I_2^{l_j}+l_j^2 I_1^{k_j}I_2^{l_j-1}\right)$  is the ${\mathcal O}(I_3^2)$ part of the BNF of $H$ at the origin.  Indeed, define  \begin{multline*} \mathcal A_n := \left\{ (u_1,u_2,v_1,v_2)\in \N^4 : {u_1\neq v_1 \text{or } u_2 \neq v_2} \right. \\ \left. \text{ and }  u_1+u_2<k_{n-1}+l_{n-1}, v_1+v_2<k_{n-1}+l_{n-1}\right\}\end{multline*}  and 
\begin{equation}\label{eq.psi} \psi= I_3^2 \sum_{\mathcal A_n} \frac{-ic_{u_1,u_2,v_1,v_2}}{(u_1-v_1)\om_1+(u_2-v_2)\om_2} \xi_1^{u_1}\xi_2^{u_2}\eta_1^{v_1}\eta_2^{v_2}\end{equation}
and observe that since 
\begin{equation*}  \{H_\om,\psi \}=-  I_3^2\sum_{\mathcal A_n} c_{u_1,u_2,v_1,v_2}\xi_1^{u_1}\xi_2^{u_2}\eta_1^{v_1}\eta_2^{v_2}     \end{equation*}
then \eqref{h41} gives 
\begin{align} \label{eq.BNF} H \circ \Phi^1_{\widehat \chi_{n-1}}\circ \Phi^1_\psi&= H_\om-\sum_{j \leq n-1}\frac{a_j^2}{k_{j} \om_1  +l_{j} \om_2} I_3^2 \left(k_j^2 I_1^{k_j-1}I_2^{l_j}+l_j^2 I_1^{k_j}I_2^{l_j-1}\right)\\ \nonumber &+
I'+II'+III'\end{align}
where $I',II',III'$ are real analytic Hamiltonians around the origin of the form
\begin{align*}
I'&=I=I_3 \sum_{j \geq n} a_j  F_j(\xi_1,\xi_2,\eta_1,\eta_2)\\
II'&=I_3^2\sum_{u_1+u_2\geq k_{n-1}+l_{n-1}  \text{or } v_1+v_2\geq k_{n-1}+l_{n-1}} c_{u_1,u_2,v_1,v_2}\xi_1^{u_1}\xi_2^{u_2}\eta_1^{v_1}\eta_2^{v_2}  \\
III'&=I_3^3W'(x,y) \end{align*}
where $W'$ is a real analytic Hamiltonian. Hence, the terms in $III'$ do not contribute to the $O^2(I_3)$ part of the BNF of $H$ at $0$.

 Since the order of the $(\xi_1,\xi_2,\eta_1,\eta_2)$-terms in $I'$ and $II'$ are  higher than $k_{n-1}+l_{n-1}$, and since \eqref{eq.BNF} holds for an arbitrary $n$, we conclude that  the $O^2(I_3)$ part of the BNF of $H$ is given by 
\begin{equation*}
\sum_{j=1}^\infty -\frac{e^{-2{j}(k_{j}+l_{j})}}{k_{j} \om_1 +l_{j} \om_2}   \left(k_j^2 I_1^{k_j-1}I_2^{l_j}+l_j^2 I_1^{k_j}I_2^{l_j-1}\right)I_3^2
\end{equation*}
which is clearly divergent since $|{k_{j} \om_1 +l_{j} \om_2}|<e^{-e^{j^4(k_j+l_j)}}$ by our Liouville hypothesis \eqref{liouville}. Observe that, in fact, the super Liouville condition $|{k_{j} \om_1 +l_{j} \om_2}|<e^{-j^2(k_j+l_j)}$ is sufficient for the divergence of the BNF. \bg $\hfill \Box$ \black

\bluegray
\subsection{\sc Proof of Theorem \ref{theo.33}} 
\black

We keep the same definitions of $\chi_j$ and $\widehat \chi_j$ as in the above proof and observe that, since since $H-\tH=I_3^3(I_1+I_3I_2)$, it still holds that for $a=a_n{\bf I}$, ${\bf I}=e^{-e^{n^3(k_{n}+l_{n})}}$, we have  
 \begin{equation*}\tH \circ \Phi^1_{\widehat \chi_{n-1}} =
 H_\om+a F_{n}(x_1,x_2,y_1,y_2)+ a^2 \tilde{G}(x,y) \end{equation*}
with $\|\tilde{G}\|_{C^1(\hB_{2n})} \leq  e^{3n(k_{n}+l_{n})}.$
The rest of the proof of the topological instability of the origin is the same as that of Theorem \ref{theo.3}. \bg $\hfill \Box$ \black 


\bluegray
\subsection{\sc Proof of Theorem \ref{theo.4}}
\black 


Note that for any value of $({\bf I}, {\bf J})\in \R_+\times \R_+^*$, the set  $\{(x,y)\in \R^8 :  I_3=\xi_3\eta_3={\bf I}, \xi_4\eta_4={\bf J} \}$ is invariant under all the flows we consider in this construction. 

If we fix now $I_4={\bf J}:=I_{4,n}$ and $I_3={\bf I}:=e^{-e^{n^3(k_{n}+l_{n})}}$,  the restriction of the flow of $H$ to the $(x_1,x_2,y_1,y_2)$ space takes the form:
\begin{equation*} \label{eq.h4} H(x,y)=  (\om_1+{\bf J})I_1+ \om_2 I_2+\om_3 {\bf I}+\om_4 {\bf J}+ \sum_{n\in \N} {\bf I} e^{-n(k_n+l_n)}  F_n(x_1,x_2,y_1,y_2)\end{equation*}
which has the same flow as in 
the proof of Theorem \ref{theo.3} with this difference that 
 $\omega_1$ is replaced by $\omega_1+{\bf J}$. Moreover, the hypothesis \eqref{rational} and \eqref{lower} of Theorem \ref{theo.4} imply hypotheses ($\mathcal L$) and \eqref{lower} of Theorem \ref{theo.3}, so the existence of the diffusive orbit follows from  Theorem \ref{theo.3}.

As for the BNF, using hypothesis \eqref{lower} we can define  $\widehat \chi_{n-1}$ and $\psi$ as in \eqref{eq.chi} and \eqref{eq.psi}, but with $\om_1+I_4$ instead of $\om_1$, and get, in sufficiently small neighborhood of the origin:
\begin{align*} \label{eq.BNF2} H \circ \Phi^1_{\widehat \chi_{n-1}}\circ \Phi^1_\psi&= H_\om-\sum_{j \leq n-1}\frac{a_j^2}{k_{j} (\om_1+I_4)  +l_{j} \om_2} I_3^2 \left(k_j^2 I_1^{k_j-1}I_2^{l_j}+l_j^2 I_1^{k_j}I_2^{l_j-1}\right)\\ \nonumber &+
I'+II'+III'\end{align*}
where $I',II',III'$ are real analytic Hamiltonians around the origin  (for this, we restrict to $I_4\ll 1$) and are of the same form as in Section \ref{sec.theo3}.
Hence, the terms in $III'$ do not contribute to the $O^2(I_3)$ part of the BNF of $H$ at $0$. As in the proof of Theorem \ref{theo.3}, we conclude that  the $I_3^2   I_1^{k_{n-1}-1}I_2^{l_{n-1}}$ term of the BNF of $H$ is given by 
\begin{equation*}
 -\frac{e^{-2(n-1)(k_{n-1}+l_{n-1})}}{k_{n-1} (\om_1+I_4) +l_{n-1} \om_2}   k_{n-1}^2  I_3^2   I_1^{k_{n-1}-1}I_2^{l_{n-1}}
\end{equation*}
which, as a series in $I_4$ has a radius of convergence smaller than \newline $|{k_{n-1}\om_1+l_{n-1} \om_2}|/k_{n-1}$, because we assumed that $k_{n-1} \om_1- l_{n-1}\om_2<0$.  Since this holds for an arbitrary $n$, and since  $|{k_{n-1}\om_1+l_{n-1} \om_2}|/k_{n-1}$ goes to $0$ as $n \to \infty$, we conclude that the BNF of $H$ diverges. \bg $\hfill \Box$ \black


\bluegray
\subsection{\sc Proof of Theorem \ref{theo.44}}
\black
We just replace $\om_2$ by $\om_2+{\bf J}^2$ everywhere in the proof of the topological instability of the fixed point in Theorem \ref{theo.4}. \bg $\hfill \Box$ \black 

\medskip

\bluegray
\section{Divergent Birkhoff normal forms with arbitrary frequencies in all degrees of freedom larger than $1$} \label{sec.newBNF}
\black

In this section, we prove Theorem {\bg B} by giving explicit examples 
of real entire Hamiltonians having an elliptic equilibrium at the origin and a divergent BNF for {\it any} degree of freedom including $d=2$, and for any non-resonant frequency vector, including vectors whose coordinates are all of the same sign. Of course we can treat just the case $d=2$ since for the case $d\geq 3$ it is sufficient to add to the Hamiltonians defined in $d=2$ a trivial integrable part $\sum_{j=3}^d \om_j I_j$.

Because it is of independent interest, this part is written in a self contained way and can be read independently from sections \ref{sec.statements} to \ref{sec555}. The cost is that some notations and definitions are reintroduced, which causes slight redundancies with the former sections. 

We suppose $\omega_1\omega_2<0$ and will explain later what difference should be applied to treat the case 
$\omega_1\omega_2>0$.

WLOG, we can assume that $|\omega_2|=\theta |\omega_1|$ for some $\theta >1$  that will be fixed in all the sequel. 

We define a sequence $(k_n,l_n) \in \N^2$  such that 
\begin{equation} \label{Bk} |k_n \omega_1 +l_n \omega_2| <\frac{1}{k_n}, \quad k_n \geq 10e^{e^n}.\end{equation}

 The difference with the examples of Theorems \eqref{theo.3} to \eqref{theo.44} is that when $d=2$ we do not have the extra action variables $I_3$ and $I_4$ that were instrumental in the diffusion mechanism as well as in the proof of divergence of the BNF in these constructions. Instead, we intend to give to $I_2$ a double role that includes the role of $I_4$ in the proof of divergence of the BNF of Theorems \ref{theo.4} and \ref{theo.44}.  Introduce the integrable Hamiltonian 

\begin{equation*}H_\om(x,y)=(\omega_1+I_2)I_1+\omega_2I_2,\end{equation*}

In the construction we will use a sequence  of numbers  $\zeta_n\in [0,1]$ that we will fix inductively in the proof. For any choice of the sequence $(\zeta_n)_{n\in \N}\in [0,1]^\N$, we define a real entire Hamiltonian on $\R^4$ as follows 
  
\begin{equation} \label{Bham}H(x,y)=H_\om(x,y)+\sum_{n\in \N}\zeta_n  \tF_n(x_1,x_2,y_1,y_2),\end{equation}
where 
 \begin{equation*} \label{def.tFn} \tF_n(x_1,x_2,y_1,y_2)=a_n(\xi_1^{k_n}\xi_2^{l_n} +\eta_1^{k_n}\eta_2^{l_n}), \quad a_n=e^{-n(k_n+l_n)}. \end{equation*} 
\begin{theo} \label{Bthm1} For $\om$ and $(k_n,l_n)$ as in \eqref{Bk}, there exists $(\zeta_n)_{n\in \N}\in [0,1]^\N$ such that the Hamiltonian $H$ as in \eqref{Bham} has an elliptic equilibrium at the origin with a divergent Birkhoff Normal form. \end{theo}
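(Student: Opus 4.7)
I would adapt the BNF computation from the proof of Theorem~\ref{theo.4}, where the extra action $I_4$ created a small divisor $k_n(\om_1+I_4)+l_n\om_2$ whose Taylor series in $I_4$ had vanishing radius of convergence. Here, with $d=2$, that role will be played by $I_2$ itself: because $H_\om = \om_1 I_1 + \om_2 I_2 + I_1 I_2$, the cross-term $I_1 I_2$ makes $\{H_\om,\cdot\}$ act on $\xi_1^{k_n}\xi_2^{l_n}$ by multiplication by the action-dependent factor $-iQ_n(I)$, where
\[ Q_n(I) := k_n\om_1 + l_n\om_2 + k_n I_2 + l_n I_1. \]
Consequently, the formal generating function
\[ \chi_n := \frac{-i\zeta_n a_n (\xi_1^{k_n}\xi_2^{l_n} - \eta_1^{k_n}\eta_2^{l_n})}{Q_n(I)}, \]
interpreted via the geometric expansion of $1/Q_n$ as a power series in $(k_n I_2 + l_n I_1)/(k_n\om_1+l_n\om_2)$, solves $\{H_\om,\chi_n\}=-\zeta_n\tF_n$ at the formal level and encodes the whole tower of Birkhoff steps produced by $\tF_n$.

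The leading action-dependent correction generated by $\Phi^1_{\chi_n}$ is $\tfrac{\zeta_n}{2}\{\tF_n,\chi_n\}_{\mathrm{res}}$, and a direct Poisson bracket computation (the key identity being $\{\xi_1^{k_n}\xi_2^{l_n}, \eta_1^{k_n}\eta_2^{l_n}/Q_n\} = i(k_n^2 I_1^{k_n-1}I_2^{l_n} + l_n^2 I_1^{k_n}I_2^{l_n-1})/Q_n - 2ik_n l_n I_1^{k_n}I_2^{l_n}/Q_n^2$) shows that it contributes to the BNF coefficient of $I_1^{k_n-1}I_2^{l_n+m}$ the quantity
\[ \alpha_{n,m}(\zeta_n) := -\zeta_n^2 a_n^2 k_n^2 \cdot \frac{(-k_n)^m}{(k_n\om_1+l_n\om_2)^{m+1}}. \]
Using the Dirichlet bound $|k_n\om_1+l_n\om_2| \leq 1/k_n$ from \eqref{Bk}, we get $|\alpha_{n,m}(\zeta_n)| \geq \zeta_n^2 a_n^2 k_n^{2m+3}$, hence $|\alpha_{n,m}(\zeta_n)|^{1/(k_n+l_n-1+m)} \to k_n^2$ as $m \to \infty$, a quantity that diverges with $n$ since $k_n \geq 10\,e^{e^n}$. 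If $\alpha_{n,m}$ were the entire BNF coefficient $b_{k_n-1,\,l_n+m}$, divergence would follow by diagonal extraction.

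The main obstacle is to rule out cancellations from contributions of $\tF_j$ for $j \neq n$. I would choose $m_n$ moderate (say $m_n = \lfloor (k_n+l_n)/3 \rfloor$) and strengthen the growth in \eqref{Bk} so that $k_{n+1}+l_{n+1} > 3(k_n+l_n+m_n)$, both compatible with Dirichlet. A degree count on multi-Poisson-bracket chains then shows that $b_{k_n-1,\,l_n+m_n}$ reduces to the quadratic polynomial $\zeta_n^2\,\alpha_{n,m_n}(1) + \beta_{n,m_n}$, with $\beta_{n,m_n}$ depending only on $\zeta_1,\dots,\zeta_{n-1}$: higher powers of $\zeta_n$ and all $\zeta_{>n}$ are excluded because the corresponding bracket chains produce monomials of degree strictly greater than $k_n+l_n-1+m_n$. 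One then fixes $(\zeta_n)$ inductively: once $\zeta_1,\dots,\zeta_{n-1}$ are chosen, $\beta_{n,m_n}$ is determined, and the two choices $\zeta_n \in \{0,1\}$ give values of $b_{k_n-1,l_n+m_n}$ differing by $|\alpha_{n,m_n}(1)|$, so by the triangle inequality at least one satisfies $|b_{k_n-1,l_n+m_n}| \geq \tfrac12|\alpha_{n,m_n}(1)|$. Taking the resulting sequence as $n\to\infty$ yields $|b_{p_n,q_n}|^{1/(p_n+q_n)} \to \infty$, proving divergence of the BNF.
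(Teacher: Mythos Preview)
Your strategy is the paper's strategy: exploit the cross term $I_1I_2$ in $H_\om$ so that the BNF reduction of $\tF_n$ produces a resonant contribution proportional to $I_1^{k_n-1}I_2^{l_n}/(k_n\om_1+l_n\om_2+k_nI_2+\ldots)$, then read off a coefficient of $I_1^{k_n-1}I_2^{l_n+m}$ that blows up thanks to the tiny divisor from \eqref{Bk}. Your generating function $\chi_n$ with $1/Q_n$ and your choice $m_n\approx(k_n+l_n)/3$ are minor variants of the paper's $\chi_n$ with $U_n=1/(k_n(\om_1+I_2)+l_n\om_2)$ and $m=\hat k_n-l_n\approx\eps l_n$; both work at the heuristic level.

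There is, however, a real gap in your inductive step. You assert that the target BNF coefficient has the form $\zeta_n^2\,\alpha_{n,m_n}(1)+\beta_{n,m_n}$ with $\beta_{n,m_n}$ depending only on $\zeta_1,\dots,\zeta_{n-1}$, and then use the two-point trick $\zeta_n\in\{0,1\}$. But nothing in a \emph{total}-degree count rules out a linear term $\zeta_n P_n(\zeta_0,\dots,\zeta_{n-1})$: such a contribution arises from bracket chains involving one copy of $\tF_n$ and several copies of $\tF_j$, $j<n$, and its total degree can perfectly well equal $2(k_n-1+l_n+m_n)$. The paper's Proposition~\ref{Bkey} explicitly keeps this linear term, writing $\Gamma_n=\zeta_n^2\gamma_n+\zeta_nP_n+Q_n$, and then concludes differently: since $|\gamma_n|\ge e^{nk_n}$, the quadratic $\zeta_n\mapsto\Gamma_n$ necessarily attains a value of size $\ge e^{nk_n/2}$ somewhere in $[0,1]$ regardless of $P_n,Q_n$. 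Your $\{0,1\}$ argument breaks as soon as $P_n\ne0$, since then $b(1)-b(0)=\alpha_{n,m_n}(1)+P_n$ is uncontrolled. A second, related point: showing that the $\zeta_n^2$-coefficient is \emph{exactly} $\alpha_{n,m_n}(1)$ (and not $\alpha_{n,m_n}(1)$ plus further $\zeta_n^2$-corrections from subsequent BNF steps) requires tracking the $\xi_1+\eta_1$-degree, not merely the total degree: this is precisely the content of the paper's Lemma~\ref{Breduction}, where one checks that every additional $\zeta_n^2$-resonant contribution lands in $I_1^{k_n}h_n(I_2)$, hence misses $I_1^{k_n-1}$. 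Your ``degree count on multi-Poisson-bracket chains'' is too coarse to deliver either of these conclusions.
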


Similarly to the proof 
of divergence of the BNF of Theorems \ref{theo.4} and \ref{theo.44}, the main ingredient is the appearance in  the computation of the BNF of terms that include a pole close to $I_2=0$. Unlike $I_4$, the term $I_2$ in  $(\omega_1+I_2)I_1$ is not decoupled from the terms $\tF_n$ that break the integrability of the Hamiltonian. For that reason, the computations of the BNF will be more involved, but the advantage compared to the proof of Theorem {\bg A} is that in the current section we just want to prove the divergence of the BNF without any further information on the dynamics near the origin of the Hamiltonian flow.

The key to the proof is an explicit formal computation of some coefficients of the BNF of H. First, fix $\eps\in (0,0.01)$ such that $1+2\eps<\theta$ and consider the integer 
\begin{equation} \label{Bkn}  \hat k_n=[(1+\eps)l_n] <k_n\end{equation}

\begin{prop} The coefficient $\Gamma_n$ of $I_1^{k_n-1}I_2^{\hk_n}$ in the BNF of $H$ at the origin is given by 
\begin{equation} \label{Bcoeff} 
\Gamma_n=\zeta_n^2 \gamma_n +\zeta_nP_n(\zeta_0,\ldots,\zeta_{n-1})+Q_n(\zeta_0,\ldots,\zeta_{n-1}),
\end{equation}
where $P_n$ and $Q_n$ are polynomials (that depend on $\om$ and $(k_j,l_j)$ for $j\leq n$), and 
\begin{equation}\label{Bvn} \gamma_n= (-1)^{\hk_n-l_n} a_n^2 k_n \left(\frac{k_n}{k_n\om_1+l_n \om_2} \right)^{\hk_n-l_n+1}. \end{equation} 
In particular,  \eqref{Bk} and \eqref{Bkn} imply that $|\gamma_n|\geq  e^{nk_n}$.
\label{Bkey} 
\end{prop}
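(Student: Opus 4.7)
The plan is to compute $\Gamma_n$ by performing the first step of the Birkhoff normalisation that eliminates $\zeta_n\tF_n$, and then to argue that neither subsequent normalisation steps nor the factors $\zeta_j\tF_j$ with $j\neq n$ can modify the $\zeta_n^2$-coefficient.

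\emph{Polynomial structure.} I would first view $\Gamma_n$ as a formal polynomial in $(\zeta_0,\zeta_1,\ldots)$: a nonzero contribution of a monomial $\prod_m\zeta_m^{a_m}$ requires that $a_m$ copies of one of the two monomials making up $\tF_m$ can be assembled into a pure action monomial $I_1^{k_n-1}I_2^{\hk_n}$ via a tree of Poisson brackets, possibly multiplied by the geometric expansions of the resolvents $1/D_m=1/(\alpha_m+k_mI_2+l_mI_1)$ (which only add nonnegative powers of $I_1,I_2$). Writing $a_m^{(+)}$ for the number of $\xi$-type copies and using that each Poisson bracket removes at most one pair $\xi_1\eta_1$, balance yields
\begin{equation*}
\text{final }I_1\text{-exponent}\ \geq\ \sum_m a_m^{(+)} k_m\;-\;\Bigl(\sum_m a_m - 1\Bigr).
\end{equation*}
Combined with the rapid growth $k_j\geq 10e^{e^j}$ from \eqref{Bk} and $k_0\geq 27$, this rules out $a_m>0$ for $m>n$, forces $a_n\leq 2$, and shows that if $a_n=2$ then every $a_j$ with $j<n$ must vanish (any $a_j\geq 1$ would push the $I_1$-exponent strictly above $k_n-1$ by at least $a_j(k_j/2-1)>0$). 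This yields the stated decomposition $\Gamma_n=\zeta_n^2\gamma_n+\zeta_nP_n+Q_n$ with $P_n,Q_n\in\R[\zeta_0,\ldots,\zeta_{n-1}]$ and $\gamma_n$ a pure constant (depending only on $\om$ and $(k_n,l_n)$).

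\emph{Explicit formula for $\gamma_n$.} The homological step exploits the key identity
\begin{equation*}
\{H_\om,M\}=i\bigl[(\om_1+I_2)(v_1-u_1)+(\om_2+I_1)(v_2-u_2)\bigr]M
\end{equation*}
for $M=\xi_1^{u_1}\xi_2^{u_2}\eta_1^{v_1}\eta_2^{v_2}$, in which the nonlinear $I_2I_1$ term of $H_\om$ is responsible for the $(I_1,I_2)$-dependence of the small denominator. One solves $\{H_\om,\chi_n\}=-\zeta_n\tF_n$ by
\begin{equation*}
\chi_n=\frac{-i\zeta_n a_n}{\alpha_n+k_nI_2+l_nI_1}\bigl(\xi_1^{k_n}\xi_2^{l_n}-\eta_1^{k_n}\eta_2^{l_n}\bigr),\qquad \alpha_n=k_n\om_1+l_n\om_2.
\end{equation*}
After the Lie transform, the new resonant contribution at order $\zeta_n^2$ is the action part of $\tfrac12\zeta_n\{\tF_n,\chi_n\}$; using $\{\xi_1^{k_n}\xi_2^{l_n},\eta_1^{k_n}\eta_2^{l_n}\}=i[k_n^2I_1^{k_n-1}I_2^{l_n}+l_n^2I_1^{k_n}I_2^{l_n-1}]$, the only piece whose $I_1$-exponent can reach $k_n-1$ is $-\zeta_n^2a_n^2k_n^2(\alpha_n+k_nI_2+l_nI_1)^{-1}I_1^{k_n-1}I_2^{l_n}$. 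A geometric expansion of the denominator in powers of $(k_nI_2+l_nI_1)/\alpha_n$ and extraction of the order-$(\hk_n-l_n)$ term with zero $I_1$-contribution (which has coefficient $(-k_n)^{\hk_n-l_n}/\alpha_n^{\hk_n-l_n+1}$) yields $\gamma_n$ as stated, up to an overall sign depending on the adopted convention for the Lie transform.

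\emph{Size of $\gamma_n$.} The hypothesis $|\alpha_n|<1/k_n$ from \eqref{Bk} gives $(k_n/|\alpha_n|)^{\hk_n-l_n+1}\geq k_n^{2(\hk_n-l_n+1)}$. Since $\om_1\om_2<0$ and $(k_n,l_n)$ is a Dirichlet-type approximation, one has $l_n\geq k_n/(2\theta)$ for all sufficiently large $n$; combined with $\hk_n-l_n\geq \eps l_n-1$ and $a_n=e^{-n(k_n+l_n)}$, taking logarithms gives $\log|\gamma_n|\geq -2n(k_n+l_n)+(\eps k_n/\theta)\log k_n+O(\log k_n)$, which dominates $nk_n$ for large $n$ thanks to $\log k_n\geq e^n$.

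The most delicate step is the polynomial-structure claim: the $I_1$-exponent lower bound is elementary for a single Poisson bracket tree, but a complete proof requires making it uniform across the entire iterative BNF scheme, in which intermediate non-resonant terms are eliminated by further generators whose denominators again only add nonnegative $I_1$-powers. This bookkeeping is what locks the $\zeta_n^2$-coefficient to the explicit constant $\gamma_n$.
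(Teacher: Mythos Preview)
Your proposal is correct and follows essentially the same approach as the paper: isolate the $\zeta_n^2$-coefficient of $\Gamma_n$ by an $I_1$-degree count, compute it via a single Lie transform eliminating $\zeta_n\tF_n$, and bound $|\gamma_n|$ from the small denominator $\alpha_n$. The only tactical differences are that the paper uses the partial generator $\chi_n=-i\zeta_n\tE_nU_n$ with $U_n=1/(k_n(\om_1+I_2)+l_n\om_2)$ (no $l_nI_1$ in the denominator, leaving an explicit remainder $-\zeta_nl_nI_1\tF_nU_n$) and packages the $I_1$-degree bookkeeping you flag as delicate into a concrete structural lemma (Lemma~\ref{Breduction}), whereas you solve the full homological equation and argue via a tree-based degree inequality; both routes land on the same $-\zeta_n^2a_n^2k_n^2I_1^{k_n-1}I_2^{l_n}U_n$ term and hence the same $\gamma_n$.
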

\bg 
\begin{proof}[\sc Proof of Theorem \ref{Bthm1}] \black From Proposition \ref{Bkey}, it is clear that once $\zeta_0,\ldots,\zeta_{n-1}$ are fixed, we can choose $\zeta_n \in [0,1]$ such that $|\Gamma_n| \geq e^{\frac{n}{2}k_n}$. Doing so for all $n\in \N$,  implies that the radius of convergence of the BNF of H at the origin is zero and finishes the proof of Theorem \ref{Bthm1}.\footnote{Note that, whatever values are taken by $P_n(\zeta_0,\ldots,\zeta_{n-1})$ and $Q_n(\zeta_0,\ldots,\zeta_{n-1})$, only a very small measure of $\zeta_n \in [0,1]$ would give  $|\Gamma_n|\geq  e^{\frac{n}{2}k_n}$. Hence, the prevalent choice of the sequence $\{\zeta_n\}$ will give a divergent BNF. This however, follows from the existence of just one example by the dichotomy result of Perez-Marco.} \bg \end{proof} \black

\bg \begin{proof}[\sc Proof of Proposition \ref{Bkey}] \black  We first make the following elementary observations regarding our strategy for the computation of the BNF and in particular of the coefficient $\Gamma_n$ in front of $I_1^{k_n-1}I_2^{\hk_n}$.  
\begin{itemize}
\bg \item \black The terms $\sum_{m\geq n+1}\zeta_m \tF_m(x_1,x_2,y_1,y_2)$ are of much higher degree than $2k_n+2\hk_n$ and thus do not affect the coefficient $\Gamma_n$.
\bg \item \black When performing the BNF reduction steps of $\sum_{m\leq n}\zeta_m \tF_m(x_1,x_2,y_1,y_2)$, all the coefficients of the monomials that appear inductively are polynomials in the variables $(\xi_1,\ldots,\zeta_n)$, everything else being fixed. 
\bg \item \black Due to \eqref{Bkn}, the terms that appear with powers $d\geq 3$ of $\zeta_n$ have degree higher than $3(k_n+l_n)-4>2k_n+2\hk_n$, hence these terms do not contribute to the coefficient $\Gamma_n$.
\end{itemize}
In conclusion, the coefficient $\Gamma_n$ has the quadratic form in $\zeta_n$ as in \eqref{Bcoeff} 	and we just need to estimate the coefficient $\gamma_n$ that comes with $\zeta_n^2$.

Recall that $a_n=e^{-n(k_n+l_n)}$, and  define the following real Hamiltonian on $\R^4$
\begin{equation*}\chi_{n}=-i\zeta_{n}  \tE_n U_n :  \  \tE_n=a_n(\xi_1^{k_{n}} \xi_2^{l_{n}}-\eta_1^{k_{n}} \eta_2^{l_{n}}), \ U_n=\frac{1}{k_n(\om_1+I_2)+l_n\om_2}. \ \  \end{equation*} 
Note that $U_n$ has a pole as a function of $I_2$ near $0$. However, it is still a nice analytic function in a tiny  neighborhood of $0$. This is the case for all the Hamiltonians we will consider such as $\chi_n$ for example. We will not need this fact in reality, and all the calculations below can be viewed as operations on merely formal  power series. 

Using Leibniz's product rule for Poisson brackets, we have that $ \{ H_\omega,  E_n U_n\}=U_n   \{ H_\omega,  E_n \}$ since both $H_\omega$ and $U_n$ are integrable. But $ \{ H_\omega,  iE_n \}=(k_n\omega_1+l_n\omega_2)F_n +(k_n I_2 + l_n  I_1) F_n$, hence multiplying by $\zeta_nU_n$ and regrouping gives
 \begin{equation}\label{Hchi} \{H_\om,\chi_{n}\}=-\zeta_n\tF_{n}-\zeta_nl_nI_1\tF_nU_n.\end{equation} 
This will be used to show that if we denote by $\Phi^1_{\chi_n}$  the time one map of the Hamiltonian flow of $\chi_n$, then we get the following.

\begin{lemm} \label{Breduction}We have that 

\begin{equation} \label{Bh41} H \circ \Phi^1_{\chi_{n}} = H_\om+\sum_{j\leq n-1}\zeta_j \tF_j-{\zeta_n^2} a_n^2 k_n^2   I_1^{k_n-1}I_2^{l_n}U_n+I_1^{k_n}h_n(I_2)+
\zeta_n\cR_1+\cR_2\end{equation}
with
\begin{itemize}
\bg \item \black $h_n(I_2)$ is a function of the action variable $I_2$;
\bg \item \black $\cR_2$ containing monomials of degree larger than $2k_n+2\hk_n$;
\bg \item \black $\cR_1$ containing non-resonant monomials with the sum of the degrees of $\xi_1$ and $\eta_1$ for each monomial being strictly larger than $k_n$.\footnote{Note that we allow the terms in $\cR_1$ to depend on $\zeta_n$, as well as the terms in $\cR_2$ and in $h_n(I_2)$.}
\end{itemize}
\end{lemm}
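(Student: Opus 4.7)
The plan is to expand $H \circ \Phi^1_{\chi_n}$ via the Lie series $H + \{H,\chi_n\} + \tfrac12\{\{H,\chi_n\},\chi_n\}+\cdots$ and classify every term into one of the four advertised categories. Writing $H = H_\om + \zeta_n \tF_n + \sum_{j\neq n}\zeta_j \tF_j$, identity \eqref{Hchi} shows that the first-order piece $\{H_\om,\chi_n\}$ supplies exactly the $-\zeta_n \tF_n$ that cancels the $\zeta_n\tF_n$ in $H$, leaving only $-\zeta_n l_n I_1\tF_n U_n$, whose monomials are non-resonant with $\xi_1$/$\eta_1$-degree $k_n+1$ and so enter $\zeta_n \cR_1$. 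For $j<n$, each $\zeta_j\tF_j$ is left untouched in the explicit sum of the lemma, while any subsequent bracket $\{\zeta_j \tF_j,\chi_n\}$ yields by \eqref{eqFE} (with $i=j\neq n$) only non-resonant monomials of $\xi_1$/$\eta_1$-degree $\geq k_j+k_n-2>k_n$ and so also enters $\zeta_n \cR_1$. For $j>n$, the degrees $k_j+l_j\geq k_{n+1}>2k_n+2\hk_n$ from \eqref{Bk} immediately place these into $\cR_2$. Triple and higher brackets $\mathrm{ad}^k_{\chi_n}H$ with $k\geq 3$ have total degree at least $3(k_n+l_n)-4>2k_n+2\hk_n$ (using $\hk_n=[(1+\eps)l_n]$ with $\eps<1/2$) and also go into $\cR_2$.

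The crux is isolating the $\zeta_n^2$ resonant coefficient $-\zeta_n^2 a_n^2 k_n^2 I_1^{k_n-1} I_2^{l_n} U_n$, which receives contributions from exactly two places: the first-order bracket $\{\zeta_n\tF_n,\chi_n\}$ and the second-order bracket $\tfrac12\{\{H_\om,\chi_n\},\chi_n\}$, through the leading $-\zeta_n\tF_n$ piece of \eqref{Hchi}. Leibniz's rule gives
\[
\{\tF_n,\tE_n U_n\}=U_n\{\tF_n,\tE_n\}+\tE_n\{\tF_n,U_n\},
\]
where the first bracket is evaluated from \eqref{eqFE} with $i=j=n$ as $-2ia_n^2(k_n^2 I_1^{k_n-1}I_2^{l_n}+l_n^2 I_1^{k_n}I_2^{l_n-1})$ (the non-resonant monomials of \eqref{eqFE} collapse to resonant ones when $i=j$), and the second bracket equals $il_n U_n'(I_2)\tE_n$ via $\{\tF_n,I_2\}=il_n\tE_n$. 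Collecting $I_1^{k_n-1}I_2^{l_n}U_n$ coefficients then yields $-2\zeta_n^2 a_n^2 k_n^2$ at first order and $+\zeta_n^2 a_n^2 k_n^2$ at second order, summing to the announced $-\zeta_n^2 a_n^2 k_n^2$.

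The remaining byproducts of this computation route naturally into the other categories. The partner coefficient $-\zeta_n^2 a_n^2 l_n^2 I_1^{k_n}I_2^{l_n-1}U_n$ and the resonant part of $\tE_n^2$ (which equals $-2a_n^2 I_1^{k_n}I_2^{l_n}$ and produces a further $I_2$-function times $I_1^{k_n}$) both enter $I_1^{k_n} h_n(I_2)$, while the non-resonant part of $\tE_n^2$ has $\xi_1$/$\eta_1$-degree $2k_n>k_n$ and goes into $\zeta_n \cR_1$. Finally, the secondary piece $\tfrac12\{-\zeta_n l_n I_1\tF_n U_n,\chi_n\}$ requires separate treatment: since $I_1$ is an overall factor, an expansion using $\{I_1,\tE_n\}=-ik_n\tF_n$ shows every resonant monomial it produces carries an $I_1^{k_n}$ or $I_1^{k_n+1}$ prefactor and so lands in $I_1^{k_n} h_n(I_2)$, while the non-resonant remainders have $\xi_1$/$\eta_1$-degree $\geq 2k_n-1>k_n$ and enter $\zeta_n \cR_1$.

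The main obstacle is the careful verification that no hidden resonant term of the form $I_1^{k_n-1}I_2^{\cdot}$ contaminates the advertised coefficient; the structural point is that any Poisson bracket involving $\chi_n$ (which carries $\tE_n$ of $\xi_1$/$\eta_1$-degree $k_n$) or $I_1\tF_n U_n$ strictly raises the $\xi_1$/$\eta_1$-degree beyond $k_n-1$ except in the unique configuration where $\tE_n$ pairs against $\tF_n$ itself, which is precisely the contribution tracked. Beyond this, what remains is sign- and coefficient-bookkeeping, benefitting from the fortunate $-2+1=-1$ collapse between the first- and second-order Lie terms.
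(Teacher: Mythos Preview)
Your argument is correct and follows the same route as the paper: expand in the Lie series, use \eqref{Hchi} to cancel $\zeta_n\tF_n$, and isolate the resonant $\zeta_n^2$ contribution via Leibniz on $\{\tF_n,\tE_nU_n\}$. The paper packages the two contributions you track as $-i\zeta_n^2+\tfrac12 i\zeta_n^2=-\tfrac12 i\zeta_n^2$ multiplying $\{\tF_n,\tE_nU_n\}$, which is exactly your ``$-2+1=-1$'' bookkeeping, and then treats the secondary term $\tfrac12 i\zeta_n^2 l_n\{\tF_n I_1 U_n,\tE_nU_n\}$ just as you do.

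One small omission: your classification explicitly handles $\mathrm{ad}^1_{\chi_n}(\zeta_j\tF_j)$ and $\mathrm{ad}^k_{\chi_n}H$ for $k\geq 3$, but the second-order pieces $\tfrac12\{\{\zeta_j\tF_j,\chi_n\},\chi_n\}$ are not covered by either clause. For $j=n$ this carries $\zeta_n^3$ and degree $\geq 3(k_n+l_n)-4$, so it lands in $\cR_2$; for $j<n$ the same exponent-tracking you use at first order shows these remain non-resonant with $\xi_1/\eta_1$-degree $>k_n$, so they enter $\zeta_n\cR_1$. (Also, the $\xi_1/\eta_1$-degree of $I_1\tF_n$ is $k_n+2$, not $k_n+1$.) These are cosmetic fixes; the proof stands.
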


 Before proving the lemma, we see how it implies Proposition \ref{Bkey}. 
 
 \medskip 
 
\bg  \noindent {\bf 1. }  \black First, observe that the terms in $\cR_2$ do not affect the coefficient $\Gamma_n$ because their degree is too large. 

 \medskip 
 
\bg  \noindent {\bf 2. }  \black Second, observe that the term $I_1^{k_n}h_n(I_2)$ does not affect $\Gamma_n$ because its degree in $I_1$ is higher than $k_n-1$. 

 \medskip 
 
\bg  \noindent {\bf 3.} \black Thirdly, the terms in $\sum_{j\leq n-1}\zeta_j \tF_j$ and $\zeta_n\cR_1$ will contribute to $\Gamma_n$ only in the part $\zeta_nP_n(\zeta_0,\ldots,\zeta_{n-1})+Q_n(\zeta_0,\ldots,\zeta_{n-1})$. Indeed, any term of degree $2$ in $\zeta_n$ that will come from further reduction of the non-resonant terms of $\sum_{j\leq n-1}\zeta_j \tF_j+\zeta_n\cR_1$  will have a degree higher than $k_n$ in $I_1$ (this is because the sum of the degrees of $\xi_1$ and $\eta_1$ for each monomial in $\cR_1$ is  larger or equal to $k_n+1$ and  $2(k_n+1)-2\geq 2k_n$). 

 \medskip 
 
\bg  \noindent {\bf 4. }  \black Since $U_n=\frac{1}{k(\om_1+I_2)+l\om_2}$, the expansion of $U_n$ in a power series in $I_2$ gives that the coefficient of $I_1^{k_n-1}I_2^{\hk_n}$ coming from $ a_n^2k_n^2 I_1^{k_n-1}I_2^{l_n}U_n$ is exactly $\gamma_n$ given by \eqref{Bvn}.
 
 \medskip 
 
\bg  \noindent {\bf 5. }  \black Finally, it is straightforward that  \eqref{Bk} and \eqref{Bkn} imply that $|\gamma_n|\geq  e^{nk_n}$.
  
  \medskip 
  
 We turn now to the proof of the lemma. 
  
\bg \begin{proof}[\sc Proof of Lemma  \ref{Breduction}] \black In all this proof $\cR_2$ will denote a sum of monomials of degree larger  than $2k_n+2\hk_n$ and may be different from line to line. By $\cR_1$ we denote sums of 
non-resonant monomials with the sum of the degrees of $\xi_1$ and $\eta_1$ for each monomial being strictly larger than $k_n$. Observe for example that the first terms that will appear in $\cR_1$ come from $\{F_j,\chi_n\}$ for $j\leq n-1$.

We have  \begin{align} \label{Beqh} H \circ \Phi^1_{\chi_{n}}&= H+\{H,\chi_{n}\}+\frac{1}{2!}\{\{H,\chi_{n}\},\chi_{n}\}+\ldots\\ \nonumber &= H+\{H,\chi_{n}\}+\frac{1}{2!}\{\{H,\chi_{n}\},\chi_{n}\}+\cR_2.
  \end{align}   Next, \eqref{Hchi} implies that 
\begin{align}\nonumber 
 \{H,\chi_{n}\}&=\{H_\om,\chi_n\}+\sum_{j\in \N}\zeta_j  \{\tF_j,\chi_n\} \\ \label{B101}
&=-\zeta_n\tF_n-\zeta_nl_n\tF_nI_1U_n-i\zeta_n^2\{\tF_n,\tE_nU_n\} + \sum_{j\neq n}\zeta_j\zeta_n\{\tF_j,\tE_nU_n\} \\ \label{B11} 
&=-\zeta_n\tF_n-i\zeta_n^2\{\tF_n,\tE_nU_n\}+\zeta_n\cR_1.
\end{align} 
 Now, since the terms $\{\{\tF_j,\chi_n\},\chi_n\}$ for $j\neq n$ are non-resonant, \eqref{B101}  leads to 
\begin{equation} 
\label{B22}
\{\{H,\chi_{n}\},\chi_n\}=i\zeta_n^2\{\tF_n,\tE_nU_n\}+ i\zeta_n^2l_n \{\tF_nI_1U_n,\tE_nU_n\}+\zeta_n \cR_1+\cR_2.\end{equation}

Putting together \eqref{Beqh}, \eqref{B11} and \eqref{B22}, we see that 
\begin{multline} \label{BBB} H \circ \Phi^1_{\chi_{n}} = H_\om+\sum_{j\leq n-1}\zeta_j \tF_j-\frac{1}{2}i\zeta_n^2\{\tF_n,\tE_nU_n\} \\ +\frac{1}{2} i\zeta_n^2l_n \{\tF_nI_1U_n,\tE_nU_n\}+\zeta_n\cR_1+\cR_2\end{multline}

In the lines below the notation $h_n(I_2)$ refers to a power series in $I_2$ that may change from line to line. Observe that \eqref{eqFE} implies that 
\begin{align}
\label{B33} \{\tF_n,\tE_n\}U_n&=-2ia_n^2\left(k_n^2 I_1^{k_n-1}I_2^{l_n}+l_n^2 I_1^{k_n}I_2^{l_n-1}\right)U_n \\\nonumber &=-2ia_n^2 k_n^2 I_1^{k_n-1}I_2^{l_n}U_n+I_1^{k_n}h_n(I_2) \\ \label{B44} 
\{\tF_n,U_n\}\tE_n&=a_n^2l_nk_n\left(\xi_1^{2k_n}\xi_2^{2l_n}+\eta_1^{2k_n}\eta_2^{2l_n}-2I_1^{k_n}I_2^{l_n}\right)U_n^2 \\&=\nonumber I_1^{k_n}h_n(I_2)+\cR_1\\
\label{IF} \{I_1,\tE_n\}\tF_n&=-k_n\tF_n^2=I_1^{k_n}h_n(I_2)+\cR_1.
\end{align}
It follows from \eqref{B33} and \eqref{B44} that 
\begin{equation} \label{eq1000} -\frac{1}{2}i\zeta_n^2\{\tF_n,\tE_nU_n\}=-\zeta_n^2 a_n^2 k_n^2 I_1^{k_n-1}I_2^{l_n}U_n+I_1^{k_n}h_n(I_2)+\zeta_n\cR_1\end{equation} 
It follows from \eqref{B33}, \eqref{B44} and \eqref{IF} that 
\begin{equation} \label{eq2000} \frac{1}{2} i\zeta_n^2l_n \{\tF_nI_1U_n,\tE_nU_n\}=I_1^{k_n}h_n(I_2)+\zeta_n\cR_1.\end{equation} 
Finally, \eqref{BBB}, \eqref{eq1000} and \eqref{eq2000} imply the result of Lemma \ref{Breduction}. \bg \end{proof} \black 

The proof of Proposition \ref{Bkey} is thus complete.  \bg \end{proof} \black 

\bg 
\begin{proof}[\sc Proof of Theorem {\bg B}] \black To finish the proof of Theorem {\bg B}, we need to consider the case where $\omega_1\omega_2>0$. 

We then have  a sequence $(k_n,l_n) \in \N^2$  such that  
\begin{equation} \label{Bkkk} |k_n \omega_1 -l_n \omega_2| <\frac{1}{k_n}, \quad k_n \geq 10e^{e^n}.\end{equation}

Next, we replace the definition of the Hamiltonian in \eqref{Bham} by
\begin{equation} \label{Bham2}H(x,y)=H_\om(x,y)+\sum_{n\in \N}\zeta_na_n(\xi_1^{k_n}\eta_2^{l_n} +\eta_1^{k_n}\xi_2^{l_n}).\end{equation}
The proof that the BNF of the Hamiltonian in \eqref{Bham2} under the condition \eqref{Bkkk} follows then the same lines as that for the Hamitlonian \eqref{Bham} under the condition \eqref{Bk}.
 \bg \end{proof} \black

\bigskip 
\bigskip

\noindent {\sc \bluegray Acknowledgment.}  I am grateful to H. Eliasson and M. Saprykina  for their  inputs to this paper. I am also grateful to A. Bounemoura, A. Chenciner, and R. Krikorian for valuable discussions on the topic. Thank you Tolya for always pushing me to work on this problem.

\bigskip 

\bluegray
\Large

\end{document}